\renewcommand{\P}{\mathbb{P}}
\newcommand{\C}{\mathbb{C}}
\newcommand{\cC}{\mathcal{C}}
\newcommand{\Z}{\mathbb{Z}}
\newcommand{\Q}{\mathbb{Q}}
\newcommand{\I}{\mathcal{I}}
\renewcommand{\O}{\mathcal{O}}
\newcommand{\X}{\mathcal{X}}
\newcommand{\E}{\mathcal{E}}
\DeclareMathOperator{\Bl}{Bl}
\DeclareMathOperator{\NS}{NS}
\DeclareMathOperator{\gon}{gon}
\DeclareMathOperator{\cg}{cov.gon}
\DeclareMathOperator{\irr}{irr}
\DeclareMathOperator{\mult}{mult}
\DeclareMathOperator{\coker}{coker}
\newtheorem{thm}{Theorem}[section]
\newtheorem{lemma}[thm]{Lemma}
\newtheorem{proposition}[thm]{Proposition}
\newtheorem{conj}[thm]{Conjecture}
\newtheorem{Lthm}{Theorem}
\theoremstyle{definition}
\newtheorem{remark}[thm]{Remark}
\newtheorem{example}[thm]{Example}
\newtheorem{definition}[thm]{Definition}
\newtheorem{setup}[thm]{Set-up}
\newtheorem*{acknowledgements}{Acknowledgements}
\newtheorem*{claim}{Claim}
\begin{document}

\title{Multiplicative bounds for measures of irrationality on complete intersections}
\author{Nathan Chen}

\thanks{The author's research was partially supported by an NSF postdoctoral fellowship, DMS-2103099.}

\maketitle

\allowdisplaybreaks

\thispagestyle{empty}

The purpose of this paper is to show that measures of irrationality on very general codimension two complete intersections and very general complete intersection surfaces are multiplicative in the degrees of the defining equations.

In recent years, there has been growing interest in studying measures of irrationality for projective varieties. As a higher dimensional generalization of gonality, these birational invariants quantify in various ways how far a given variety $X$ is from being rational. We will focus primarily on two of these measures, the \textit{degree of irrationality} and the \textit{covering gonality}, which are defined as follows:
\[ \irr(X) = \min \left\{ \delta > 0 \ \middle| \ \exists \text{ degree $\delta$ rational covering } X \dashrightarrow \P^{\dim X} \right\}; \]
\[ \cg(X) = \min \left\{ c > 0 \ \middle| \ \let\scriptstyle\textstyle \substack{
				\text{Given a general point } p \in X, \ \exists \text{ irreducible} \\[0.1cm] \text{curve } C \subseteq X \text{ through $p$ with gonality } c} \right\}. \]

In the case of hypersurfaces of large degree, the situation is now fairly well understood. Specifically, let $X \subset \P^{n+1}_{\C}$ be a smooth hypersurface of degree $d$ and dimension $n \geq 2$. If $X$ is very general of degree $d \geq 2n+1$, then Bastianelli, De Poi, Ein, Lazarsfeld, and Ullery \cite{BDELU17} have shown that $\irr(X) = d-1$. In the same paper, the authors proved that if $X$ is arbitrary and $d \geq n+2$, then $\cg(X) \geq d-n$. The central theme of \cite{BDELU17} is that the positivity properties of the canonical bundle yield lower bounds for measures of irrationality. Using different methods, Bastianelli, Ciliberto, Flamini, and Supino \cite{BCFS18} later computed $\cg(X) \approx d - 2 \sqrt{n}$ for very general hypersurfaces of degree $d \gg 0$.

A logical next step is to investigate the behavior of these invariants for complete intersection varieties in projective space. For complete intersections over $\C$, the same techniques in \cite{BDELU17} yield lower bounds for the covering gonality which are additive in the degrees of the defining equations. Recently, Smith \cite{Smith20} has extended these results about the covering gonality of complete intersections to positive characteristic.

However, it has been conjectured \cite[Problem 4.1]{BDELU17} that measures of irrationality on complete intersections should be \textit{multiplicative} in the degrees. As evidence, Lazarsfeld \cite[Exercise 4.12]{Lazarsfeld97} had established that the gonality of a smooth complete intersection curve $C \subset \P_{\C}^{e+1}$ of type $(a_{1}, a_{2}, \ldots, a_{e})$ with $2 \leq a_{1} \leq \cdots \leq a_{e}$ is bounded from below by $\gon(C) \geq (a_{1}-1) a_{2} \cdots a_{e}$. Further refinements due to Hotchkiss, Lau, and Ullery \cite{HLU20} show that when $4 \leq a_{1} < a_{2} \leq \cdots \leq a_{e}$ holds, the gonality of the curve $C$ is realized by projection from a suitable linear subspace. In higher dimensions, Stapleton \cite{Stapleton17} used results about Seshadri constants on hypersurfaces which were due to Ito \cite{Ito14} to give bounds for the covering gonality of codimension two complete intersections that were stronger than additive. Later, Stapleton and Ullery \cite{SU20} computed the degree of irrationality for codimension two complete intersections of type $(2, d)$ and $(3, d)$.

Our first result shows that the covering gonality of very general codimension two complete intersections is multiplicative:

\begin{Lthm}\label{thm:cgcodim2}
Let $X \subset \P^{n+2}_{\C}$ be a very general smooth complete intersection of type $(a, b)$ and dimension $n \geq 2$. If $a, b \geq 18n/7$, then
\[ \cg(X) \geq \frac{2}{3 (n+1)^{2}} \cdot ab. \]
\end{Lthm}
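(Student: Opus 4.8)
The plan is to run the positivity-of-canonical-forms strategy of Bastianelli--De~Poi--Ein--Lazarsfeld--Ullery \cite{BDELU17}, but to feed it local rather than global positivity, following the idea of Stapleton \cite{Stapleton17} of exploiting Seshadri constants of the ambient hypersurfaces via Ito's bounds \cite{Ito14}. Write $X = V(f) \cap V(g) \subset \P^{n+2}$ with $\deg f = a$ and $\deg g = b$, and set $Y = V(f)$, a very general hypersurface of dimension $n+1$ and degree $a$. Suppose toward a contradiction that $\cg(X) = c$. Then $X$ is swept out by an irreducible family of gonality-$c$ curves carrying gonality pencils, so through a very general point $x \in X$ there is a curve $C$ and a fiber $F = x + x_2 + \cdots + x_c$ of its $g^1_c$, which for general $x$ is reduced and otherwise general among such fibers.

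The first step is the separation principle of \cite{BDELU17}: since $x$ is very general and the fibers $F$ move in a pencil sweeping out $X$, every $\omega \in H^0(X, \omega_X)$ vanishing at the residual points $x_2, \ldots, x_c$ must also vanish at $x$. Contrapositively, it suffices to produce, for each reduced length-$(c-1)$ scheme arising this way, a canonical form vanishing on it but not at $x$; exhibiting such a form for all $c \le C_0 - 1$ forces $\cg(X) \ge C_0$. The worst residual schemes are the curvilinear ones concentrated at $x$, so I reduce the problem to showing that $\omega_X = \O_X(a + b - n - 3)$ separates length-$c$ curvilinear subschemes at a very general point of $X$, i.e. is sufficiently jet ample there.

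The second step converts this jet separation into a lower bound on a Seshadri constant. Writing $\epsilon = \epsilon(\O_X(1); x)$ at a very general $x$, one has $\epsilon(\omega_X; x) = (a + b - n - 3)\,\epsilon$, and the standard passage from Seshadri constants to separation of jets on an $n$-fold (losing a dimension-dependent factor) guarantees separation of length-$c$ subschemes once $(a + b - n - 3)\,\epsilon$ exceeds a quantity of order $(n+1)\,c$. The essential input is then a linear-in-degree lower bound for $\epsilon$. Here I use that every curve on $X$ also lies on $Y$, so that passing to the subvariety only increases the Seshadri constant, $\epsilon(\O_X(1); x) \ge \epsilon(\O_Y(1); x)$, together with Ito's theorem \cite{Ito14}, which bounds the Seshadri constant of a very general hypersurface of degree $a$ below by a quantity growing linearly in $a$; by symmetry one may feed in the larger of $a$ and $b$.

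Combining the two steps yields $\cg(X) \gtrsim \tfrac{1}{n+1}\,(a + b - n - 3)\,\epsilon \gtrsim \tfrac{1}{(n+1)^2}\,(a + b - n - 3)\,\max(a,b)$, and the hypotheses $a, b \ge 18n/7$ make the subtractive term $-n-3$ and the additive jet-separation correction negligible, so that the right-hand side exceeds $\tfrac{2}{3(n+1)^2}\,ab$; the constants $18/7$ and $2/3$ are exactly what is needed to absorb these lower-order errors. The main obstacle is the interface between the two steps: establishing the linear-in-degree Seshadri bound (equivalently, ruling out low-degree, highly singular curves through a very general point of a very general complete intersection) and converting that local positivity into precisely the point separation demanded by the \cite{BDELU17} mechanism, with constants sharp enough to give the clean multiplicative conclusion. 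Controlling the reduction to curvilinear jets and the accompanying dimension factors, rather than the formal bookkeeping, is where the real work lies.
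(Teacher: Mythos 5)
Your skeleton (the BDELU separation mechanism, fed by positivity coming from the ambient hypersurface $Y$ via adjunction) matches the paper's reduction step, but the positivity input you propose cannot exist, and the gap is quantitative rather than technical. On any irreducible projective $m$-fold one has the elementary upper bound $\epsilon(L;x) \leq \sqrt[m]{L^{m}}$, so for the degree-$a$ hypersurface $Y \subset \P^{n+2}$ the Seshadri constant of $\O_{Y}(1)$ at \emph{any} point is at most $a^{1/(n+1)}$ (and on $X$ itself at most $(ab)^{1/n}$). Ito's theorem gives lower bounds of exactly this root order --- it is nearly sharp, not linear in the degree. Feeding a root-order $\epsilon$ into your chain of inequalities produces $\cg(X) \gtrsim (a+b)\max(a,b)^{1/(n+1)}/(n+1)^{2}$, which is precisely Stapleton's argument and yields exactly the ``stronger than additive'' bounds the paper cites as the prior state of the art; multiplicativity is out of reach by this route for all $n \geq 2$. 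The paper gets past this barrier by replacing one-point local positivity with a \emph{multi-point} statement: Theorem~\ref{thm:codim2nef} asserts nefness of $bH - \sum_{i=1}^{r}(n+1)E_{i}$ on the blow-up of $Y$ at an \emph{arbitrary} $r$-tuple of points with $r \approx \tfrac{2}{3(n+1)^{2}}ab$, a total multiplicity mass $r(n+1)$ far beyond any single-point Seshadri constant, yet numerically permissible since $(bH-\sum(n+1)E_{i})^{n+1} = ab^{n+1} - r(n+1)^{n+1} > 0$. Its proof is an induction on $r$: a curve $\widetilde{C}$ meeting the divisor negatively satisfies $\sum m_{i} > bk/(n+1)$; averaging the induction hypothesis over $s$-element subsets caps $\sum m_{i}$ from above; and the contradiction comes from playing the plane-projection inequality $\sum m_{i}(m_{i}-1)/2 \leq (k-1)(k-2)/2 - p_{g}(C)$ (Lemma~\ref{eqMult}) against the Ein--Voisin genus bound $p_{g}(C) \geq 1 + \tfrac{1}{2}(a+b-2n-3)\deg C$ for curves on very general complete intersections (Proposition~\ref{genusHyp}), forcing $k < \tfrac{2}{9}a$ while also $k \geq a-2n$, impossible once $a \geq 18n/7$. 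These genus-theoretic inputs --- which are exactly the ``ruling out of low-degree, highly singular curves'' you correctly identify as the real work --- are entirely absent from your sketch, and no refinement of the Seshadri estimate can substitute for them.

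There is also a structural flaw in your first step: the residual points $x_{2}, \ldots, x_{c}$ of the gonality fiber lie elsewhere along the covering curve, not infinitely near $x$, so your reduction to curvilinear subschemes concentrated at $x$ is backwards, and jet ampleness of $\omega_{X}$ at a single very general point cannot separate them. What the mechanism actually requires (Propositions~\ref{prop:covgonX} and \ref{prop:cg+nef}) is that sections of $K_{Y} + \O_{Y}(X)$ --- hence, by adjunction, of $K_{X}$ --- separate \emph{any} $r$ distinct points of $X$, which is why the nefness statement must hold uniformly over all $r$-tuples and why Kawamata--Viehweg vanishing on the blow-up is invoked. So even granting the (false) linear Seshadri bound, one-point jet separation would not verify the hypothesis of Proposition~\ref{prop:covgonX}; the multi-point formulation is not an optimization but the essential new content of the proof.
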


\noindent Since in general $\irr(X) \geq \cg(X)$, we obtain the same inequality for the degree of irrationality.

Our second theorem gives multiplicative bounds for complete intersection surfaces:

\begin{Lthm}\label{thm:cgCIsurfaces}
Let $X \subset \P_{\C}^{e+2}$ be a very general smooth complete intersection surface of type $(d_{1}, \ldots, d_{e})$. There exist positive constants $A = A(e)$ and $B = B(e)$ such that if $d_{i} \geq A$ for all $1 \leq i \leq e$, then
\[ \cg(X) \geq B \cdot d_{1} \cdots d_{e}. \]
\end{Lthm}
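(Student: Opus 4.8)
The plan is to run the positivity machinery underlying Theorem~\ref{thm:cgcodim2}, organized as an induction on the number of defining equations $e$ with the codimension-two case as the base. Recall the mechanism behind such lower bounds \cite{BDELU17}: if $\cg(X) = c$, then there is a family of curves $\{C_t\}$ covering $X$ carrying gonality pencils $C_t \dashrightarrow \P^1$ of degree $c$, so that a very general point $x \in X$ lies on a member $C$ inside a moving fibre $D$ of length $c$. By the Cayley--Bacharach argument of Bastianelli--De~Poi--Ein--Lazarsfeld--Ullery, such a moving fibre cannot be separated by the canonical series $|\omega_X|$; contrapositively, if $|\omega_X|$ separates every length-$\ell$ packet of points lying on a curve through a general point, then $\cg(X) > \ell$. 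Since $X$ is a complete intersection we have $\omega_X = \O_X(d_1 + \cdots + d_e - e - 3)$, and the whole task reduces to showing that this separation degree grows like $B(e)\,d_1 \cdots d_e$.

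First I would set up the induction on $e$, the base case $e = 2$ being Theorem~\ref{thm:cgcodim2} with $n = 2$. For the inductive step, realize the surface $X$ of type $(d_1, \ldots, d_e)$ as a divisor of class $\O_T(d_e)$ in the threefold $T = \{F_1 = \cdots = F_{e-1} = 0\} \subset \P^{e+2}$ of type $(d_1, \ldots, d_{e-1})$, whose general hyperplane section is a surface $S$ of type $(d_1, \ldots, d_{e-1})$ in $\P^{e+1}$ to which the inductive hypothesis applies, giving $\cg(S) \geq B(e-1)\,d_1 \cdots d_{e-1}$. By adjunction $\omega_X = \bigl(\omega_T \otimes \O_T(d_e)\bigr)\big|_X$ while $\omega_S = \bigl(\omega_T \otimes \O_T(1)\bigr)\big|_S$, so passing from the hyperplane section $S \in |\O_T(1)|$ to $X \in |\O_T(d_e)|$ twists the canonical bundle by $\O_T(d_e - 1)$, and the step should show that this extra twist boosts the separation degree by a factor proportional to $d_e$. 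That the target order $d_1 \cdots d_e$ is both correct and attained is confirmed by Lazarsfeld's multiplicative estimate $\gon(C) \geq (d_1 - 1)d_2 \cdots d_e$ \cite{Lazarsfeld97} for the linear-section curves of $X$; this also calibrates $A(e)$ and $B(e)$. One subtlety here is that $S$ is only a general, rather than very general, member, so applying the inductive bound to it requires a semicontinuity or specialization argument for the covering gonality.

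The main obstacle is the per-step positivity estimate once $e \geq 3$, where both standard measures of positivity fall short. On the one hand, separation of the spread-out points of $D$ by the global sections of $\omega_X = \O_X(\sum d_i - e - 3)$ is controlled by the very-ampleness of $\O_X(1)$ and is only \emph{additive} in the $d_i$ --- this is exactly the additive bound that the methods of \cite{BDELU17} already yield. On the other hand, the finer local positivity of $\omega_X$ at a general point is bounded by its Seshadri constant, which cannot exceed $\sqrt{(\omega_X)^2} = (\textstyle\sum d_i - e - 3)\sqrt{d_1 \cdots d_e}$; this is already $\ll d_1 \cdots d_e$ once $e \geq 3$, so, in contrast to the codimension-two case, multiplicativity cannot be read off from the positivity of $\omega_X$ in isolation. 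The way out must use that $D$ is not an arbitrary length-$c$ scheme but lies on the curve $C \subset X$: restricting to $\omega_X|_C$ and exploiting the complete-intersection structure of $X$ along $C$ --- in the spirit of Lazarsfeld's argument for complete-intersection curves, with the local input modeled on Ito's Seshadri computations for very general hypersurfaces \cite{Ito14} --- is what should recover the full product rather than its square root. Arranging these estimates to hold simultaneously for all $e$ equations, with a uniform lower bound on the degrees and a surviving positive constant, is the delicate bookkeeping that forces $A(e)$ and $B(e)$ to depend on $e$ and to be non-explicit, unlike the clean constant of Theorem~\ref{thm:cgcodim2}.
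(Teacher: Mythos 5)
Your proposal stalls exactly at the step you yourself flag as ``the main obstacle,'' and that gap is fatal: the induction on $e$ has no working mechanism. In the paper, separation of $r$ points by $K_X$ is obtained through Proposition~\ref{prop:cg+nef} by proving nefness of $\mu^{\ast}\O_Y(a_e) - \sum_i 3E_i$ on the blow-up of the ambient \emph{threefold} $Y$ of type $(a_1, \ldots, a_{e-1})$ (Theorem~\ref{thm:dim2nef}), and the induction there runs on the number of points $r$, not on the number of equations $e$. Knowing $\cg(S)$ for a hyperplane section $S \in \abs{\O_Y(1)}$ gives no control over curves on $Y$ passing through a prescribed point set with high multiplicities, which is what nefness requires; and, as you correctly compute, no positivity measure of $\omega_X$ (nor of $\O_Y(a_e)$ on $Y$) taken in isolation can beat the square-root barrier once $e \geq 3$. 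So the assertion that ``the extra twist boosts the separation degree by a factor proportional to $d_e$'' is precisely the unproved claim, not a proof step, and your gesture toward ``restricting to $\omega_X|_C$ in the spirit of Lazarsfeld and Ito'' does not supply one.

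What actually breaks the square-root barrier in the paper is a set of inputs absent from your sketch. First, the degrees are taken of the special form $a_i = (e+1)! \cdot q_i$ with $q_i$ distinct primes, so that Koll\'ar-style degenerations to abelian varieties (Lemma~\ref{divisibleCurves}, Example~\ref{ex:AV}, Proposition~\ref{degreeCurves}) force \emph{every} curve $C \subset Y$ to satisfy $\deg C \geq \alpha / (e! ((e+1)!)^{e-1})$ with $\alpha = a_1 \cdots a_{e-1}$: a multiplicative lower bound on curve degrees is the true source of multiplicativity. Second, a hypothetical $L$-negative curve is given a conflicting \emph{upper} bound on its degree $k$ by (i) trapping $C$ in an intersection $V_1 \cap V_2$ of surfaces of degree $\leq a_e/3$ through the $p_i$ (a Koszul-resolution dimension count, Lemma~\ref{lem:globalsectionsKoszul}), (ii) a Castelnuovo-type estimate $p_a(C) \leq (a_1 + \cdots + a_{e-1} + 2a_e/3)\,k$, and (iii) the local estimate $p_a(C) - p_g(C) \gtrsim \sum_i m_i^{3/2}$ of Proposition~\ref{prop:multiplicityDeltaInvariant}, which replaces the planar bound $\sum_i m_i(m_i-1)/2$ that is too weak here; the collision of the two bounds on $k$ proves nefness. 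Finally, arbitrary degrees $(d_1, \ldots, d_e)$ are reached not by semicontinuity within a linear system but by degenerating $X'$ to a union containing a very general complete intersection of the special type with $d_i/2 < a_i \leq d_i$ (primes supplied via Sondow's bound on Ramanujan primes, Remark~\ref{remark:primes}) together with the family statement Proposition~\ref{propCgfamilies} for reducible central fibers. Incidentally, the constants are explicit, e.g. $B(e) = 2/(3^4(3e+2)((e+1)!)^e 2^{e-1})$, contrary to your closing claim that they must be non-explicit.
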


\noindent See the end of \S 1, Proposition~\ref{degreeCurves}, and Remark~\ref{remark:primes} for explicit constants.

In \S 1, we will present a reduction step which first appeared in the work of Stapleton \cite[\S 5.2]{Stapleton17}. The theorems will reduce to showing that certain families of line bundles on the blow-ups of complete intersection varieties are big and nef, which can be thought of as multi-point Seshadri constants. See Theorems~\ref{thm:codim2nef} and \ref{thm:dim2nef} for the explicit statements and details. In \S 2, we will collect several tools that will be used to control the numerical invariants of curves on complete intersections. In \S 3, we will prove Theorem~\ref{thm:codim2nef}. The proof of Theorem~\ref{thm:dim2nef} will take up most of \S 4. Throughout the paper, we work over $\C$.

\begin{acknowledgements}
I am grateful to Robert Lazarsfeld for many valuable discussions and for suggesting the approach to Proposition~\ref{prop:multiplicityDeltaInvariant}. I would also like to thank Olivier Martin, Mihnea Popa, and David Stapleton for giving valuable feedback about an earlier draft of the paper and thank Aaron Landesman for helpful discussions. A version of Theorem~\ref{thm:cgcodim2} was part of the author's Ph.D. thesis at Stony Brook University.
\end{acknowledgements}

%
%

\section{Reduction step}

In this section, we will show how Theorems~\ref{thm:cgcodim2} and \ref{thm:cgCIsurfaces} follow from the nefness of certain families of line bundles. Our starting point is the following result \cite[Theorem 1.10]{BDELU17}, which says that positivity properties of the canonical bundle lead to lower bounds on the covering gonality (see also \cite[Remark 5.14]{Stapleton17}):

\begin{proposition}\label{prop:covgonX}
Let $X$ be a smooth projective variety and suppose that there exists an integer $r$ such that the canonical bundle $K_{X}$ separates $r$ points on an open set. Then
\[ \cg(X) \geq r+1. \]
\end{proposition}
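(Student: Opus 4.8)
The plan is to argue by contradiction, exploiting the principle that positivity of $K_X$ obstructs the existence of low-degree pencils. Suppose $\cg(X) = c \le r$. Then $X$ admits a covering family of curves of gonality $c$, and after the usual reductions (base-change to trivialize the pencils, normalization, and resolution) I may assume I have a smooth projective variety $\mathcal{C}$ with $\dim \mathcal{C} = n = \dim X$, a dominant generically finite evaluation map $f \colon \mathcal{C} \to X$, and a second generically finite morphism $\Psi \colon \mathcal{C} \to \mathcal{P}$ of degree $c$ onto a $\P^1$-bundle $\mathcal{P} \to T$, whose fibres cut out the $g^1_c$'s on the members $C_t$ of the family. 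A general fibre of $\Psi$ is then a reduced divisor $D = x_1 + \cdots + x_c$ moving in a pencil on some $C_t$, and $f(x_1), \dots, f(x_c)$ are $c$ points of $X$ lying in the open set on which $K_X$ separates $r$ points.

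The heart of the argument is to show that these $c$ points fail to impose independent conditions on $H^0(X, K_X)$. Since $f$ is generically finite between smooth $n$-folds, adjunction gives an inclusion $f^* K_X \hookrightarrow K_{\mathcal{C}}$, so every $\omega \in H^0(X, K_X)$ pulls back to a genuine holomorphic $n$-form $f^*\omega$ on $\mathcal{C}$. I then push this form forward along $\Psi$ via the Grothendieck trace $\mathrm{Tr}_\Psi \colon \Psi_* K_{\mathcal{C}} \to K_{\mathcal{P}}$. The key vanishing is that $\mathcal{P}$, being a $\P^1$-bundle, carries no global top-forms: restricting a section of $K_{\mathcal{P}}$ to a fibre lands in $H^0(\P^1, \O(-2)) = 0$, whence $H^0(\mathcal{P}, K_{\mathcal{P}}) = 0$. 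Consequently $\mathrm{Tr}_\Psi(f^*\omega) = 0$ for every $\omega$.

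The next step is to read off what this vanishing says over a general point $q \in \mathcal{P}$ with $\Psi^{-1}(q) = \{x_1, \dots, x_c\}$. Unwinding the definition of the trace at an unramified point expresses $\mathrm{Tr}_\Psi(f^*\omega)(q)$ as a weighted sum $\sum_i a_i\, \omega(f(x_i))$, where the weights $a_i$ incorporate the Jacobian of $\Psi$ at $x_i$ and are nonzero because at a general $q$ both $\Psi$ is unramified and $f$ is immersive along the fibre. The resulting identity $\sum_i a_i\, \omega(f(x_i)) = 0$, valid for all $\omega \in H^0(X, K_X)$, forces the evaluation map $H^0(X, K_X) \to \bigoplus_{i=1}^c (K_X)_{f(x_i)}$ to land in a proper hyperplane, so the points $f(x_1), \dots, f(x_c)$ do not impose independent conditions.

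This contradicts the hypothesis: because $c \le r$ and the $f(x_i)$ are distinct points of the locus where $K_X$ separates $r$ points, padding to an $r$-tuple and restricting shows that those $c$ points \emph{do} impose independent conditions. Hence no such covering family exists for $c \le r$, and $\cg(X) \ge r+1$. I expect the main obstacle to be the pointwise interpretation of the trace — verifying that the coefficients $a_i$ are genuinely nonzero so that the linear relation is nontrivial — together with the genericity bookkeeping needed to guarantee that a general $\Psi$-fibre is reduced, maps to distinct points of $X$, and lands in the separation locus.
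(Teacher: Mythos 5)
The paper itself does not prove this proposition: it is quoted directly from \cite[Theorem 1.10]{BDELU17}, and your argument is essentially a reconstruction of the proof given there (and in its antecedents going back to Mumford's trace technique) --- covering family $\to$ $\P^1$-bundle $\mathcal{P}$, inclusion $f^{*}K_X \hookrightarrow K_{\mathcal{C}}$, trace along $\Psi$, vanishing of $H^{0}(\mathcal{P}, K_{\mathcal{P}})$, hence a linear relation with nonzero coefficients among the evaluations, i.e.\ the fiber of the pencil satisfies the Cayley--Bacharach condition with respect to $|K_X|$, contradicting separation of $r \geq c$ points. Your proposal is correct, and the genericity points you flag are exactly the standard verifications in \cite{BDELU17} (note that distinctness of the images $f(x_i)$ is automatic for general fibers because the members of a covering family are curves \emph{in} $X$, so $f$ is birational on each member).
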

We will apply this proposition as follows. Consider the inclusions $X \subset Y \subset \P^{n+e}$, where
\begin{list}{$\diamond$}{}
\item $Y$ is a complete intersection of dimension $n+1$ and type $(a_{1}, \ldots, a_{e-1})$, and
\item $X \in \abs{\O_{Y}(a_{e})}$ is a complete intersection of dimension $n$ and type $(a_{1}, \ldots, a_{e})$.
\end{list}
We would like to show that $K_{X}$ separates $r$ points on an open set, for suitable $r$. By passing to a complete intersection $Y$ of larger dimension, we can take advantage of adjunction:

\begin{proposition}\label{prop:cg+nef}
Suppose that there exists a positive integer $r$ such that on the blow-up $\mu \colon \widetilde{Y} \rightarrow Y$ along any distinct points $p_{1}, \ldots, p_{r} \in X$ with exceptional divisors $E_{1}, \ldots, E_{r}$, the line bundle
\[ L \colonequals \mu^{\ast}\O_{Y}(X) - \sum_{i=1}^{r} (n+1)E_{i} \]
is nef and big. Then $\cg(X) \geq r+1$.
\end{proposition}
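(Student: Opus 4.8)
The plan is to derive the separation-of-points hypothesis of Proposition~\ref{prop:covgonX} from Kawamata--Viehweg vanishing applied to $L$. Fix distinct points $p_{1}, \ldots, p_{r} \in X$, let $Z = \{p_{1}, \ldots, p_{r}\}$ with ideal sheaf $\I_{Z} \subset \O_{X}$, and consider
\[ 0 \to K_{X} \otimes \I_{Z} \to K_{X} \to \bigoplus_{i=1}^{r} K_{X}|_{p_{i}} \to 0. \]
It suffices to prove $H^{1}(X, K_{X} \otimes \I_{Z}) = 0$: this makes the evaluation map $H^{0}(X, K_{X}) \to \bigoplus_{i} K_{X}|_{p_{i}}$ surjective, so $K_{X}$ separates $p_{1}, \ldots, p_{r}$, and since the points are arbitrary this holds on an open set, giving $\cg(X) \geq r+1$ by Proposition~\ref{prop:covgonX}.

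Next I would transport this vanishing to the blow-up. Let $\sigma \colon \widetilde{X} \to X$ be the blow-up at $Z$ with exceptional divisors $F_{i}$; it is the restriction of $\mu$ to the strict transform $\widetilde{X} \subset \widetilde{Y}$, with $F_{i} = E_{i}|_{\widetilde{X}}$. Since $R^{j}\sigma_{\ast}\O_{\widetilde{X}}(-\sum F_{i}) = 0$ for $j > 0$ and $\sigma_{\ast}\O_{\widetilde{X}}(-\sum F_{i}) = \I_{Z}$, the projection formula and Leray give $H^{1}(X, K_{X} \otimes \I_{Z}) = H^{1}(\widetilde{X}, \sigma^{\ast}K_{X} - \sum F_{i})$. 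The heart of the argument is to recognize $\sigma^{\ast}K_{X} - \sum F_{i}$ as the restriction to $\widetilde{X}$ of $K_{\widetilde{Y}} + L$. Using $\mu^{\ast}\O_{Y}(X) = \widetilde{X} + \sum E_{i}$ (each $p_{i}$ is a smooth point of $X$) together with $K_{\widetilde{Y}} = \mu^{\ast}K_{Y} + n\sum E_{i}$, one computes $K_{\widetilde{Y}} + L = \mu^{\ast}(K_{Y} + X) - \sum E_{i}$; restricting along $\widetilde{X}$ and invoking adjunction $(K_{Y} + X)|_{X} = K_{X}$ yields $(K_{\widetilde{Y}} + L)|_{\widetilde{X}} = \sigma^{\ast}K_{X} - \sum F_{i}$, exactly as needed.

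It then remains to run the restriction sequence
\[ 0 \to (K_{\widetilde{Y}} + L)(-\widetilde{X}) \to K_{\widetilde{Y}} + L \to (K_{\widetilde{Y}} + L)|_{\widetilde{X}} \to 0 \]
and show that the two ambient groups bounding $H^{1}(\widetilde{X}, (K_{\widetilde{Y}} + L)|_{\widetilde{X}})$ in the long exact sequence vanish. The term $H^{1}(\widetilde{Y}, K_{\widetilde{Y}} + L)$ vanishes by Kawamata--Viehweg vanishing on the smooth variety $\widetilde{Y}$, since $L$ is big and nef by hypothesis; this is where the hypothesis enters. For the other term, a short computation gives $(K_{\widetilde{Y}} + L)(-\widetilde{X}) = K_{\widetilde{Y}} - n\sum E_{i}$, and I would handle $H^{2}(\widetilde{Y}, K_{\widetilde{Y}} - n\sum E_{i})$ by Serre duality, reducing it to $H^{n-1}(\widetilde{Y}, \O(n\sum E_{i}))^{\ast}$, and then by $R\mu_{\ast}$ (the coefficient $n$ lies in the range $0 \leq n \leq \dim Y - 1$ on which $R^{j}\mu_{\ast}\O(n\sum E_{i}) = 0$ for $j > 0$ and $\mu_{\ast}\O(n\sum E_{i}) = \O_{Y}$) to $H^{n-1}(Y, \O_{Y})^{\ast}$, which vanishes because $Y$ is a complete intersection of dimension $n+1$ and $0 < n-1 < n+1$ (recall $n \geq 2$).

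The main obstacle --- really the only non-formal input beyond Kawamata--Viehweg --- is the auxiliary vanishing $H^{2}(\widetilde{Y}, K_{\widetilde{Y}} - n\sum E_{i}) = 0$ together with the discrepancy bookkeeping that makes the adjunction line up. It is worth noting that the coefficient $n+1$ in the definition of $L$ is exactly what is required: it forces $L - \widetilde{X} = -n\sum E_{i}$, placing the coefficient $n$ at the very top of the admissible range for the vanishing of $R^{j}\mu_{\ast}$ (the value $n+1$ would already yield $R^{n}\mu_{\ast} \neq 0$), while simultaneously ensuring that $(K_{\widetilde{Y}} + L)|_{\widetilde{X}}$ carries coefficient exactly $1$ along each $F_{i}$, which is precisely what separation of the $r$ points demands.
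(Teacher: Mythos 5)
Your argument is correct, but it deduces the separation statement by a different route than the paper. Both proofs hinge on the same identity $K_{\widetilde{Y}} + L = \mu^{\ast}(K_{Y} + \O_{Y}(X)) - \sum_{i} E_{i}$ and the same application of Kawamata--Viehweg vanishing to $H^{1}(\widetilde{Y}, K_{\widetilde{Y}} + L)$. The paper, however, stays on $Y$: pushing that vanishing down gives $H^{1}\left(Y, (K_{Y}+\O_{Y}(X)) \otimes \I_{Z}\right) = 0$, hence the adjoint series $\abs{K_{Y} + \O_{Y}(X)}$ separates the $r$ points, and since the points lie on $X$ the evaluation map factors through $H^{0}(X, K_{X})$ by adjunction --- so $K_{X}$ separates them with no further input. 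You instead prove the stronger statement $H^{1}(X, K_{X} \otimes \I_{Z}) = 0$ by restricting to the strict transform $\widetilde{X}$, which costs you one extra, non-formal vanishing: $H^{2}\left(\widetilde{Y}, (K_{\widetilde{Y}}+L)(-\widetilde{X})\right) = 0$. Your treatment of that term is correct, though it can be shortened: since $(K_{\widetilde{Y}}+L)(-\widetilde{X}) = \mu^{\ast}K_{Y}$, the projection formula gives $H^{2}(\widetilde{Y}, \mu^{\ast}K_{Y}) \cong H^{2}(Y, K_{Y}) \cong H^{n-1}(Y, \O_{Y})^{\ast}$ directly, with no need for Serre duality on $\widetilde{Y}$ or the analysis of $R^{j}\mu_{\ast}\O(n\sum E_{i})$. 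But note that this step genuinely uses $H^{n-1}(Y, \O_{Y}) = 0$, i.e.\ that the complete intersection $Y$ has no intermediate cohomology, together with $n \geq 2$, whereas the paper's factoring argument is purely formal and would apply to any smooth projective $Y$. What your route buys in exchange is the full cohomological statement on $X$ itself (the points impose independent conditions on $\abs{K_{X}}$ with vanishing $H^{1}$, not merely a surjective evaluation map inherited from $Y$), and your closing observation about why the coefficient $n+1$ is exactly right --- forcing $L - \widetilde{X} = -n\sum E_{i}$ at the top of the admissible range while leaving coefficient $1$ along each $F_{i}$ --- is a correct and worthwhile sharpness remark that the paper does not make explicit.
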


\begin{proof}[Proof of Proposition~\ref{prop:cg+nef}]
By Kawamata-Viehweg vanishing for big and nef line bundles,
\begin{align*}
H^{1} \left(Y, (K_{Y} + \O_{Y}(X)) \otimes \I_{\{ p_{1}, \ldots, p_{r} \}} \right) &= H^{1} \left( \widetilde{Y}, \mu^{\ast}(K_{Y} + \O_{Y}(X)) - \sum_{i=1}^{r} E_{i} \right) \\
&= H^{1}(\widetilde{Y}, K_{\widetilde{Y}} + L) = 0.
\end{align*}
Here we use the fact that $K_{\widetilde{Y}} \cong \mu^{\ast}K_{Y} + n E$. The vanishing above gives a surjection
\[ H^{0}(Y, K_{Y} + \O_{Y}(X)) \twoheadrightarrow H^{0}\left( Y, (K_{Y} + \O_{Y}(X)) \otimes \O_{\{ p_{1}, \ldots, p_{r} \}} \right). \]
In other words, sections of the adjoint bundle $K_{Y} + \O_{Y}(X)$ separate any finite set of $r$ distinct points in $X$. By the adjunction formula, $K_{X} \cong (K_{Y} + \O_{Y}(X)) \big|_{X}$ and hence sections of $K_{X}$ separate any finite set of $r$ distinct points in $X$. Proposition~\ref{prop:covgonX} implies that $\cg(X) \geq r+1$.
\end{proof}
\noindent In practice, once we prove nefness of $L$, it will follow numerically that $L$ is big \cite[Theorem 2.2.16]{Lazarsfeld04}. By Proposition~\ref{prop:cg+nef}, it suffices to prove nefness of families of line bundles in two different settings:

\begin{setup}[Codimension two complete intersections]\label{setup:codim2nef}
Let $n \geq 2$ be arbitrary and set $e = 2$. Then $X \subset Y \subset \P^{n+2}$, where $Y \in \abs{\O_{\P^{n+2}}(a)}$ is a very general hypersurface and $X \in \abs{\O_{Y}(b)}$ is a very general complete intersection of dimension $n$ such that $b \geq a \geq 18n/7$. With this, we will show:
\end{setup}

\begin{thm}\label{thm:codim2nef}
Consider Set-up~\ref{setup:codim2nef} and fix an integer
\[ r \leq \frac{2}{3(n+1)^{2}} ab. \]
For any set of distinct points $p_{1}, \ldots, p_{r} \in Y$, if we let $\mu \colon \widetilde{Y} \rightarrow Y$ denote the blow-up of $Y$ at these points with exceptional divisor $E_{i}$ over $p_{i}$ and set $H = \mu^{\ast}\O_{Y}(1)$, then the following divisor on $\widetilde{Y}$ is nef:
\[ bH - \sum_{i=1}^{r} (n+1)E_{i}. \]
\end{thm}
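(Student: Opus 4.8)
The plan is to check nefness of $D \colonequals bH - \sum_{i=1}^r (n+1)E_i$ directly against an arbitrary irreducible curve $\widetilde{C} \subseteq \widetilde{Y}$, splitting into two cases according to whether $\widetilde{C}$ lies in the exceptional locus. If $\widetilde{C} \subseteq E_{i_0}$ for some $i_0$, then $H \cdot \widetilde{C} = 0$, while $E_i \cdot \widetilde{C} = 0$ for $i \neq i_0$ and $E_{i_0} \cdot \widetilde{C} = -\deg \widetilde{C} \leq 0$ because $\O_{E_{i_0}}(E_{i_0}) \cong \O_{\P^n}(-1)$; hence $D \cdot \widetilde{C} = (n+1)\deg\widetilde{C} \geq 0$ and these curves are harmless. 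I may therefore assume $\widetilde{C}$ is the strict transform of an irreducible curve $C \subseteq Y$. Writing $\delta \colonequals \deg_{\P^{n+2}} C$ and $m_i \colonequals \mult_{p_i}(C)$, so that $H \cdot \widetilde{C} = \delta$ and $E_i \cdot \widetilde{C} = m_i$, the entire statement reduces to the numerical inequality
\[ b\,\delta \;\geq\; (n+1)\sum_{i=1}^{r} m_i. \]

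First I would bound the total multiplicity $M \colonequals \sum_{i} m_i$ by aggregating local contributions through the delta-invariant. Each singular point satisfies $\delta_{p_i}(C) \geq \binom{m_i}{2}$, and summing over all singularities gives $\sum_i \binom{m_i}{2} \leq \sum_{p} \delta_p(C) = p_a(C) - p_g(C) \leq p_a(C)$; this is the multiplicity/delta-invariant estimate collected in \S 2. Rewriting this as $\sum_i m_i^2 \leq 2\,p_a(C) + M$ and combining with Cauchy--Schwarz, $M^2 \leq r \sum_i m_i^2$ (at most $r$ of the $m_i$ are nonzero), yields $M \leq \tfrac12\big(r + \sqrt{r^2 + 8r\,p_a(C)}\big)$. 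It then remains to insert a genus bound of the shape $p_a(C) \leq c_n\,\delta^2$ with $c_n \approx \tfrac{1}{2(n+1)}$, which for curves spanning a large linear subspace is a Castelnuovo-type estimate in $\P^{n+2}$. Substituting this, together with the hypotheses $r \leq \tfrac{2}{3(n+1)^2}ab$ and $a \leq b$, the dominant term $\delta\sqrt{r/(n+1)}$ of the bound is at most $\tfrac{b\delta}{n+1}$ precisely when $r \leq \tfrac{b^2}{n+1}$, and the linear correction $\tfrac r2$ is absorbed once $\delta \gtrsim \tfrac{a}{3(n+1)}$; both are guaranteed by the stated constants.

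The main obstacle is precisely the curves for which this genus estimate is too weak: curves of small degree and curves with small linear span. A rational plane curve of degree $\delta$ carrying $\binom{\delta-1}{2}$ nodes, for instance, would have enough total multiplicity at its singular points to violate the target inequality, so such curves must be excluded or separately controlled. This is where the very-generality of $Y$ and the threshold $a \geq 18n/7$ enter. Since $18n/7 > 2n+1$ for every $n \geq 2$, the Fano scheme of lines on $Y$ has negative expected dimension and is empty, so $Y$ contains no lines and, more generally, no positive-dimensional linear subspaces; the degenerate curves that survive are confined to plane (and low-dimensional linear) sections $Y \cap \Lambda$ of degree at most $a$, and for these the sharper bound $p_a(C) \leq \binom{\delta-1}{2}$ together with $\delta \leq a \leq b$ handles everything except the curves of very small degree. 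Ruling out these residual low-degree, highly singular curves by very-generality --- in the spirit of the Seshadri-constant bounds of Ito and Stapleton and using the numerical tools of \S 2 --- and verifying that every surviving curve satisfies the inequality is where the real work lies, and it is this case analysis that fixes the exact thresholds $18n/7$ and $\tfrac{2}{3(n+1)^2}$.
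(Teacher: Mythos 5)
Your reduction to the numerical inequality $b\,\delta \geq (n+1)\sum_i m_i$ and the delta-invariant/Cauchy--Schwarz scheme are sound in outline, and they resemble the skeleton of the paper's argument. But there is a genuine gap, and it sits exactly where you concede ``the real work lies.'' The fatal move is the chain $\sum_i \binom{m_i}{2} \leq p_a(C) - p_g(C) \leq p_a(C)$, which discards the geometric genus. The geometric genus is the entire point: the only substantive use of very generality in the paper's proof is the Ein--Voisin bound (Proposition~\ref{genusHyp}), which says that \emph{every} integral curve $C$ on the very general degree-$a$ hypersurface $Y \subset \P^{n+2}$ satisfies $p_g(C) \geq 1 + \tfrac{1}{2}(a-2n-3)\deg C$. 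Fed into the plane-projection estimate of Lemma~\ref{eqMult}, this does two jobs at once: it strengthens the singularity budget from $p_a$ to $p_a - p_g$, and, since the left-hand side of that estimate is nonnegative, it forces $\deg C \geq a-2n$ for every curve on $Y$. Your substitute uses of very generality (emptiness of the Fano scheme of lines, absence of linear subspaces) cannot recover this. Concretely: if $Y$ contained any irreducible curve of degree $\delta < \tfrac{2a}{3(n+1)}$ --- even a \emph{smooth} one --- then placing all $r$ points on that curve gives $b\delta - (n+1)r < 0$ and the theorem would simply be false; yet for a smooth curve all $m_i \leq 1$, every delta-invariant inequality in your scheme is vacuous, and no Castelnuovo-type upper bound on $p_a$ detects the configuration. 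So the ``residual low-degree case'' is not residual; it is the theorem, and it is closed only by a degree lower bound of the shape $k \geq a-2n$, which in the paper comes from $p_g$, not from Fano-scheme dimension counts.

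A second structural ingredient you are missing is the induction on $r$. The paper assumes nefness for $s$ points, averages the nef classes $bH - \sum_{i \in I}(n+1)E_i$ over all $s$-element subsets $I$ to conclude that $\tfrac{s+1}{s}bH - \sum_{i=1}^{s+1}(n+1)E_i$ is nef, and thereby gets the reverse inequality $\sum_i m_i \leq \tfrac{s+1}{s(n+1)}bk$ for a minimal counterexample (along with $m_i \geq 1$ for all $i$). This controls the linear term in $\sum m_i^2 = \sum m_i(m_i-1) + \sum m_i$ that your quadratic-in-$M$ trick only partially absorbs. With both inputs --- the genus bound below and the averaging bound above --- the paper pins the degree of an offending curve into the window $a - 2n \leq k < \tfrac{2}{9}a$, which is empty precisely when $a \geq 18n/7$; that is where the threshold comes from. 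Your correct observations (exclusion of exceptional curves, the plane-section degree bound $\delta \leq a$, the Cauchy--Schwarz step) are all compatible with this, but as written the proposal does not prove the statement.
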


\noindent Granting Theorem~\ref{thm:codim2nef} for now, we will first show how it implies Theorem~\ref{thm:cgcodim2}.

\begin{proof}[Proof of Theorem~\ref{thm:cgcodim2}]

In the setting of Set-up~\ref{setup:codim2nef}, fix $n \geq 2$, choose $b, a \geq 18n/7$, and set $r = \left\lfloor \frac{2}{3(n+1)^{2}} \cdot ab \right\rfloor$. Theorem~\ref{thm:codim2nef} shows that for any tuple of $r$ distinct points $p_{1}, \ldots, p_{r} \in X$, the divisor
\[ L \colonequals bH - \sum_{i=1}^{r} (n+1)E_{i} \]
on the blow-up $\mu \colon \widetilde{Y} \rightarrow Y$ is nef. It is straightforward to check that $(L^{n+1}) > 0$ on $\widetilde{Y}$:
\[ \left( bH - \sum_{i=1}^{r} (n+1)E_{i} \right)^{n+1} = a b^{n+1} - r \cdot (n+1)^{n+1} \geq a b^{n+1} - \frac{2}{3} (n+1)^{n-1} ab > 0 \]
holds as long as $b \geq a \geq n+1$, so $L$ is also big. By Proposition~\ref{prop:cg+nef},
\[ \cg(X) \geq r+1 \geq \frac{2}{3(n+1)^{2}} \cdot ab. \qedhere \]
\end{proof}

\begin{setup}[Complete intersection surfaces]\label{setup:ciSurfacenef}
Set $n = 2$ and let $e \geq 2$ be arbitrary. We will first consider a very general smooth complete intersection surface
\[ X \subset Y \subset \P^{e+2} \qquad \text{of type} \qquad (a_{1}, \ldots, a_{e}), \]
where the $a_{i}$ are integers of a special form. More precisely, assume that $3e \leq a_{1} \leq \cdots \leq a_{e}$ and
\[ a_{i} \colonequals (e+1)! \cdot q_{i} \qquad \text{for } i = 1, \ldots, e-1, \]
where the $q_{i}$ are positive integers which are pairwise coprime. Let $Y$ be the smooth complete intersection threefold of type $(a_{1}, \ldots, a_{e-1})$ containing $X$. We will prove:
\begin{thm}\label{thm:dim2nef}
Consider Set-up~\ref{setup:ciSurfacenef} and fix a positive integer
\[ r \leq \frac{2}{3^{4} (3e+2) ((e+1)!)^{e}} \cdot a_{1} \cdots a_{e}. \]
For any collection $p_{1}, \ldots, p_{r} \in Y$ of $r$ points, if we write $\widetilde{Y} \colonequals \Bl_{\{ p_{1}, \ldots, p_{r} \}}Y \xrightarrow{\mu} Y$ for the blow-up with exceptional divisor $E_{i}$ over $p_{i}$, then the divisor
\[ \mu^{\ast}\O_{Y}(a_{e}) - \sum_{i=1}^{r} 3E_{i}\qquad \text{is nef on } \widetilde{Y}. \]
\end{thm}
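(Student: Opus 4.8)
The plan is to verify nefness of $D \colonequals \mu^{\ast}\O_Y(a_e) - \sum_{i=1}^{r} 3E_i$ directly, by showing $D \cdot C \geq 0$ for every irreducible curve $C \subset \widetilde{Y}$, in the same spirit as Theorem~\ref{thm:codim2nef}. First I would dispose of the curves contracted by $\mu$: if $C \subseteq E_i$, then $H \cdot C = 0$ and $E_j \cdot C = 0$ for $j \neq i$, while $-E_i\big|_{E_i} = \O_{\P^2}(1)$ is very ample on $E_i \cong \P^2$, so $D \cdot C = -3(E_i \cdot C) \geq 0$. It then remains to treat the strict transform of an irreducible curve $C_0 \subset Y$; writing $d$ for the degree of $C_0$ with respect to $\O_Y(1)$ and $m_i = \mult_{p_i}(C_0)$, the inequality $D \cdot C \geq 0$ becomes
\[ a_e \, d \;\geq\; 3\sum_{i=1}^{r} m_i; \]
call this inequality $(\star)$. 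The coefficient $3 = n+1$ is forced by $\dim Y = 3$, and matching it will drive the whole estimate.

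The engine for $(\star)$ is a surface-section argument exploiting that $Y$ is a threefold. Given the points $p_1, \ldots, p_r$, the plan is to produce a surface $S \in \abs{\O_Y(a_e)}$ with $\mult_{p_i}(S) \geq 3$ for every $i$ and with $C_0 \not\subseteq S$. Granting such an $S$, intersecting $C_0$ with $S$ on $Y$ gives
\[ a_e \, d = C_0 \cdot S = \sum_{q} i(C_0, S; q) \;\geq\; \sum_{i=1}^{r} \mult_{p_i}(C_0)\,\mult_{p_i}(S) \;\geq\; 3\sum_{i=1}^{r} m_i, \]
which is exactly $(\star)$. That a surface with prescribed triple points exists is a cohomological count: imposing $\mult_{p_i} \geq 3$ costs at most $\binom{5}{3} = 10$ conditions per point, so the linear system is nonempty once $h^0(\O_Y(a_e))$ exceeds roughly $10r$, and this holds for $a_e \geq A(e)$ because $h^0(\O_Y(a_e)) \sim \tfrac{1}{6}a_e^3 \deg Y$ grows cubically in $a_e$ while $r$ grows only linearly.

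The crux, and the step I expect to be the main obstacle, is arranging $C_0 \not\subseteq S$, equivalently controlling the one-dimensional part of the base locus of $\Lambda \colonequals \abs{\O_Y(a_e) \otimes \prod_i \mathfrak{m}_{p_i}^{3}}$. A curve can only lie in this base locus if containing it imposes essentially no further conditions on $\Lambda$, so $C_0$ can be avoided as soon as $a_e d \gtrsim 10r$, i.e. once $d$ exceeds about $10r/a_e$. This is precisely where the arithmetic hypotheses of Set-up~\ref{setup:ciSurfacenef} enter: the lower bound on the degree of curves on the very general threefold $Y$ (Proposition~\ref{degreeCurves}), which I would prove by a Koll\'ar-type divisibility argument using $a_i = (e+1)!\,q_i$ with the $q_i$ pairwise coprime, forces every irreducible $C_0 \subset Y$ to satisfy $d \geq D_0$ with $a_e D_0$ comfortably larger than $10r$. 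The numerical threshold $r \leq \tfrac{2}{3^{4}(3e+2)((e+1)!)^{e}}\, a_1 \cdots a_e$ is calibrated exactly so that this comparison goes through.

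For the residual curves of small degree that could in principle sit in the base locus, I would fall back on the multiplicity estimate of Proposition~\ref{prop:multiplicityDeltaInvariant}, which bounds $\sum_i \binom{m_i}{2}$ by the delta invariant of $C_0$ and hence by $\binom{d-1}{2}$ after a general plane projection; combined with the degree lower bound $D_0$, which caps how many of the arbitrary points a low-degree curve can meet, this yields $(\star)$ directly in the range where the section argument is unavailable. Assembling the three cases—contracted curves, curves handled by the triply-singular section, and small-degree curves handled by the delta-invariant estimate—gives $D \cdot C \geq 0$ for every irreducible $C$, hence nefness. The delicate point throughout is the bookkeeping of constants, which must respect both $a_i \geq 3e$ and the $(e+1)!$-divisibility so that the thresholds from the three regimes overlap with no gap.
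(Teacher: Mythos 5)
Your central mechanism breaks down precisely in the only case where it is needed, and the failure is not a matter of delicate bookkeeping but of logic. Suppose the strict transform $\widetilde{C}$ of $C_0$ violates nefness, i.e.\ $3\sum_i m_i > a_e k$ where $k = \deg C_0$. Any member $S$ of your system $\Lambda$ of surfaces with triple points at the $p_i$ has strict transform $\widetilde{S} \equiv a_e H - \sum_i c_i E_i$ with $c_i \geq 3$, so
\[ \widetilde{S} \cdot \widetilde{C} \;=\; a_e k - \sum_i c_i m_i \;\leq\; a_e k - 3\sum_i m_i \;<\; 0, \]
which forces $\widetilde{C} \subset \widetilde{S}$, i.e.\ $C_0 \subset S$ for \emph{every} $S \in \Lambda$. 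So "arranging $C_0 \not\subseteq S$" is impossible for exactly the curves that threaten nefness; conversely, if some member avoids $C_0$ then $(\star)$ holds trivially, so your crux step is equivalent to the conclusion and the argument is circular. Your heuristic that $C_0$ leaves the base locus once $a_e d \gtrsim 10r$ also ignores that the number of conditions imposed by containing $C_0$ is reduced by $p_a(C_0)$, which a priori grows quadratically in $d$, so even the avoidance count does not go through without a genus bound. The paper exploits the forced containment \emph{constructively} rather than trying to evade it: its Lemma~\ref{lemma: IntersectV1V2} uses exactly this negativity mechanism (with the Koszul dimension count, Lemma~\ref{lem:globalsectionsKoszul}) to trap $C$ in an intersection $V_1 \cap V_2$ of two surfaces of degrees $b_1, b_2 \leq a_e/3$, and then a Castelnuovo-style argument yields the linear genus bound $p_a(C) \leq (a_1 + \cdots + a_{e-1} + 2a_e/3)\deg C$, which is the ingredient your proposal is missing.

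Your fallback case is also underpowered as stated, for two reasons. First, you invoke the plane-projection estimate of Lemma~\ref{eqMult}, which bounds $\sum_i m_i(m_i-1)/2$ by roughly $k^2/2$; pairing this with $\sum_i m_i^2 \gtrsim (a_e k/3)^2/r$ only yields a threshold $r \gtrsim a_e^2$, quadratic in the last degree alone and nowhere near multiplicative in $a_1, \dots, a_e$. The actual proof needs the sharper exponent-$3/2$ inequality of Proposition~\ref{prop:multiplicityDeltaInvariant} (with $n=3$), played against the \emph{linear-in-degree} Castelnuovo bound above, to force $k < \tfrac{1}{e!((e+1)!)^{e-1}}\,a_1\cdots a_{e-1}$, which then collides with the Koll\'ar-type lower bound of Proposition~\ref{degreeCurves} — you correctly identified both of these ingredients, but not the genus bound that makes them bite. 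Second, your proof has no induction on $r$, whereas the paper's argument essentially requires it: the induction hypothesis, via averaging over the $r$-element subsets of $\{p_1,\dots,p_{s+1}\}$, gives the upper bound $\sum_i m_i \leq \tfrac{s+1}{3s} a_e k$, without which the term $2\sum_i m_i$ arising from Proposition~\ref{prop:multiplicityDeltaInvariant} cannot be absorbed and the estimates do not close. In short: the correct architecture is induction on $r$, a contradiction curve, forced containment in two low-degree surfaces, a Castelnuovo genus bound, and then the collision of upper and lower degree bounds — not a base-point-freeness argument for a system of triply-singular surfaces.
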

\end{setup}

\noindent Granting Theorem~\ref{thm:dim2nef} for now, we will use it to prove Theorem~\ref{thm:cgCIsurfaces}.

\begin{proof}[Proof of Theorem~\ref{thm:cgCIsurfaces}]
In the setting of Set-up~\ref{setup:ciSurfacenef}, fix $e \geq 3$, choose $d \geq (e+1)! \cdot 4(e-1) \log 4(e-1)$, and set
\[ r = \left\lfloor \frac{2}{3^{4} (3e+2) ((e+1)!)^{e}} \cdot a_{1} \cdots a_{e} \right\rfloor. \]
Theorem~\ref{thm:dim2nef} shows that for any tuple of $r$ distinct points $p_{1}, \ldots, p_{r} \in X$, the divisor
\[ L \colonequals a_{e}H - \sum_{i=1}^{r} (n+1)E_{i} \]
on the blow-up $\mu \colon \widetilde{Y} \rightarrow Y$ is nef. It is straightforward to check that $(L^{3}) > 0$ on $\widetilde{Y}$, so $L$ is also big. By Proposition~\ref{prop:cg+nef},
\[ \cg(X) \geq r+1 \geq \frac{2}{3^{4} (3e+2) ((e+1)!)^{e}} \cdot a_{1} \cdots a_{e}. \]

So far, this only gives lower bounds on the covering gonality of complete intersection surfaces of special degrees. Now consider a very general complete intersection surface $X' \subset \P^{e+2}$ of type $(d_{1}, \ldots, d_{e})$ where $d_{1} \leq \cdots \leq d_{e}$. If $d_{1}$ is sufficiently large, then we may choose $a_{1}, \ldots, a_{e}$ such that
\[ (a_{1}, \ldots, a_{e-1}, a_{e}) \colonequals ((e+1)! q_{1}, \ldots, (e+1)! q_{e-1}, d_{e}), \]
the inegers $q_{i}$ are distinct primes, and $d_{i}/2 < a_{i} \leq d_{i}$ for $1 \leq i \leq e-1$.\footnote{In Remark~\ref{remark:primes}, we will explain how to choose these $a_{i}$.} Next, degenerate $X'$ to a union of varieties, with one component consisting of a very general complete intersection $X \subset \P^{e+2}$ of type $(a_{1}, \ldots, a_{e})$. The family can be chosen so that the total space is irreducible. In order to prove that
\[ \cg(X') \geq \cg(X), \]
we need a strengthened version of \cite[Proposition 2.2]{GK19} involving families where the central fiber is possibly reducible:

\begin{proposition}\label{propCgfamilies}
Let $f \colon \X \rightarrow T$ be a flat family of irreducible projective varieties over an irreducible one-dimensional base. Assume that the total space $\X$ is irreducible and suppose that for all $0 \not= t \in T$, the fiber $\X_{t}$ has covering gonality $\leq d$. Then every component $\X' \subset \X_{0}^{\text{red}}$ of the reduced special fiber has covering gonality $\leq d$.
\end{proposition}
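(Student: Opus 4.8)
The plan is to prove a relative lower-semicontinuity statement for covering gonality: I would spread the gonality pencils on the general fibers into a single family over a neighborhood of $0$ in $T$, take flat limits of both the curves and their pencils as $t \to 0$, and show that the limits still cover the special fiber and still carry pencils of degree $\le d$. This strengthens the irreducible-central-fiber case of \cite[Proposition 2.2]{GK19} by also tracking how the covering curves spread across the several components of $\X_{0}^{\mathrm{red}}$.

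First I would reformulate the hypothesis geometrically. For general $t \ne 0$, the bound $\cg(\X_{t}) \le d$ furnishes a covering family of gonality-$\le d$ curves: an irreducible base of dimension $\dim \X_{t} - 1$, a dominant map from the associated family of irreducible curves to $\X_{t}$, and a relative map of degree $\le d$ to $\P^{1}$. Encoding each curve together with its pencil as its graph $\Gamma \subseteq \X_{t} \times \P^{1}$, so that the projection $\mathrm{pr}_{\X}|_{\Gamma}$ is the inclusion of the gonality curve and $\mathrm{pr}_{\P^{1}}|_{\Gamma}$ is the degree-$\le d$ pencil, these graphs sweep out a subvariety of $\X_{t} \times \P^{1}$ dominating $\X_{t}$. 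To assemble them over $T$, I would work in the relative Chow variety of $1$-cycles on $\X \times \P^{1}$ over $T$, which is proper over $T$ once a relative projective embedding is fixed. The graphs over the general fibers trace out a constructible family there; I would pass to an irreducible component $\bar B$ of its closure that dominates $T$ and whose universal cycle $\mathcal U$ dominates $\X$. Here the irreducibility of the total space $\X$ is essential: it forces a single component of parameters to dominate all of $\X$, rather than merely each fiber separately.

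Properness of $\bar B \to T$ then supplies limit cycles $\Gamma_{0} \subseteq \X_{0} \times \P^{1}$ over $t = 0$, and each inherits $\P^{1}$-degree $\le d$ because the degree of $\mathrm{pr}_{\P^{1}}$ is constant in a Chow family. For the covering statement, I would note that $\mathcal U \to \X$ is dominant and proper, hence surjective; restricting this surjection over $0 \in T$ shows that the limit cycles cover the whole of $\X_{0}$, and in particular dominate every component $\X' \subseteq \X_{0}^{\mathrm{red}}$. This is precisely the point where irreducibility of $\X$ upgrades the classical statement to the reducible central fiber.

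Finally, for gonality I would fix a general point $p \in \X'$, select a limit cycle $\Gamma_{0}$ through the fiber over $p$, and take the irreducible component $\Gamma_{0}'$ containing that point. Since $\mathrm{pr}_{\X}$ is generically injective on the graphs over $\bar B$, it stays generically injective on the non-vertical part of the limit, so for general $p$ the component $\Gamma_{0}'$ maps birationally onto an irreducible curve $C' \subseteq \X'$ through $p$, while $\mathrm{pr}_{\P^{1}}|_{\Gamma_{0}'}$ descends to a degree-$\le d$ pencil on the normalization of $C'$, giving $\gon(C') \le d$. Letting $p$ vary produces a covering family of gonality-$\le d$ curves on $\X'$, so $\cg(\X') \le d$. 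The hard part will be exactly this last step: controlling the degeneration of the pencil so that the component of $\Gamma_{0}$ through a general $p$ is contracted neither by $\mathrm{pr}_{\P^{1}}$ (so that the limit pencil remains non-constant of degree between $1$ and $d$) nor by $\mathrm{pr}_{\X}$. I expect to settle this by a degree accounting on the limit cycle — the total $\P^{1}$-degree is $\le d$ and is distributed among the components, the vertical components $\{x\} \times \P^{1}$ meet $\X_{0}$ in only finitely many points and so are avoided by a general $p$, and the generic injectivity of $\mathrm{pr}_{\X}$ rules out the remaining contracted possibilities — which together pin down an honest gonality-$\le d$ pencil on the curve through $p$.
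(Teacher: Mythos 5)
Your overall skeleton coincides with the paper's: spread the covering pencils out over $T$, take limits over $0 \in T$ in a proper parameter space, and use irreducibility of the total space $\X$ to force the limiting family to dominate \emph{every} component of $\X_0^{\mathrm{red}}$ --- that last observation is exactly the ``key point'' the paper isolates before deferring everything else to \cite[Proposition 2.2]{GK19}, which works with the compactified Kontsevich moduli space of stable maps. The genuine gap lies in your replacement of stable maps by Chow limits of graphs inside the \emph{fixed} target $\X \times \P^1$, at precisely the step you flag as hard. A limit cycle $\Gamma_0$ can contain a horizontal component through a general point that is contracted by $\mathrm{pr}_{\P^1}$: nothing in the bidegree bookkeeping prevents $\Gamma_0$ from having the shape $C' \times \{y_0\} + \sum_j k_j \left( \{x_j\} \times \P^1 \right)$, where the entire $\P^1$-degree $d$ has bubbled onto vertical components sitting over finitely many points of $C'$. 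Generic injectivity of $\mathrm{pr}_{\X}$ is perfectly compatible with such a limit, so it rules nothing out, and your dimension count disposes only of the vertical components themselves, not of the contracted horizontal one through $p$. This is not hypothetical: take a constant family of curves $C$ with degree-$d$ pencils $f_t \colon C \rightarrow \P^1$ degenerating to a constant map away from finitely many points; the Chow limit of the graphs is exactly of the displayed shape, and the component through a general point of $C$ carries no pencil at all. Since $\bar{B}$ is an arbitrary component of a closure, you cannot exclude this behavior by choices made over general $t$, so the final step of your argument, as written, fails.

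The remedy --- and the reason the paper, via \cite{GK19}, works with stable maps --- is to compactify the \emph{maps} rather than their graphs, allowing the target $\P^1$ to degenerate as well: by Harris--Mumford admissible covers (equivalently, stable maps to a semistable degeneration of $\P^1$), the limit of the pencils is a \emph{finite} degree-$d$ cover of a tree of $\P^1$'s, so every component of the limit curve inherits a finite map of degree between $1$ and $d$ to a $\P^1$, with no contracted components possible. Your birationality claim for $\mathrm{pr}_{\X}$ on the limit is also unjustified (limits of graphs can acquire multiple branches over the same image curve), but it is unnecessary: if a component $Z$ has $\deg_{\P^1} Z = e$ with $1 \leq e \leq d$ and maps with degree $m \geq 1$ onto a curve $C' \subset \X'$, pushing the fibers of $Z \rightarrow \P^1$ forward to $C'$ yields a one-dimensional family of degree-$e$ divisors sweeping out $C'$, hence nonconstant; removing the fixed part gives $\gon(C') \leq e \leq d$. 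With this substitution, your covering argument --- which correctly identifies the role of irreducibility of $\X$, in agreement with the paper --- goes through.
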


\begin{proof}[Proof of Proposition~\ref{propCgfamilies}]
Adaptating the proof of \cite[Proposition 2.2]{GK19}, the key point is that irreducibility of the total space of the family means that if the base of the covering family coming from the compactified Kontsevich moduli space of stable maps is irreducible (this may be assumed), then the covering family \textit{automatically} covers every component of the central fiber. The rest of argument follows through.
\end{proof}

By the proposition above, we have
\[ \cg(X') \geq \cg(X) \geq \frac{2}{3^{4} (3e+2)((e+1)!)^{e}} \cdot \frac{d_{1}}{2} \cdots \frac{d_{e-1}}{2} \cdot d_{e}, \]
which simplifies to give the desired bound with a constant of
\[ B(e) = \frac{2}{3^{4} (3e+2)((e+1)!)^{e}2^{e-1}} \]
This completes the proof of Theorem~\ref{thm:cgCIsurfaces}.
\end{proof}

\begin{remark}
Note that Theorems \ref{thm:codim2nef} and \ref{thm:dim2nef} are false without the very general assumption. For instance, one may take a complete intersection $Y$ which contains a line and then choose points which all lie on the line.
\end{remark}

We will now give a conceptual outline of the proofs of Theorems~\ref{thm:codim2nef} and \ref{thm:dim2nef}. Proceeding by contradiction, the failure of the line bundle $L$ on $\widetilde{Y}$ to be nef means that there exists a curve $\widetilde{C} \subset \widetilde{Y}$ which intersects negatively against $L$. By projecting to the complete intersection $Y$, this roughly says that the image curve $C \colonequals \mu(\widetilde{C})$ passes through the points $p_{i}$ with large multiplicities. We then relate this to the geometry of curves on very general complete intersections to reach a contradiction. For Theorem~\ref{thm:codim2nef}, we will need lower bounds on the geometric genus of $C$, which follow from work of Ein \cite{Ein88} and Voisin \cite{Voisin96}. The argument for Theorem~\ref{thm:dim2nef} will require (i) a more precise estimate of the arithmetic genus of $C$ and its relationship to the multiplicities $m_{i}$, and (ii) lower bounds on the degree of $C$. The first ingredient (i) will incorporate some ideas originally due to Castelnuovo \cite{Castelnuovo1893} about estimating the genus of a space curve, while (ii) will involve certain degeneration arguments of Koll\'{a}r \cite{Kollar90}.

%
%

\section{Numerical invariants of curves on complete intersections}

In this section, we will collect some results about the geometry of curves in complete intersections, which will be used in the proofs of Theorems~\ref{thm:codim2nef} and \ref{thm:dim2nef}. We begin by giving lower bounds for the geometric genus of curves on generic complete intersections, which arise from calculations of Ein \cite{Ein88} and Voisin \cite{Voisin96} (for comparison, see \cite[proof of Proposition 3.8]{BDELU17}):

\begin{proposition}\label{genusHyp}
Let $X \subset \P^{n+e}$ be a very general complete intersection of dimension $n \geq 2$ and type $(d_{1}, \ldots, d_{e})$. For any integral curve $C \subset X$, we have
\[ p_{g}(C) \geq 1 + \frac{1}{2} (\sum_{i=1}^{e} d_{i} - 2n - e) \cdot \deg_{\P^{n+e}}(C). \]
\end{proposition}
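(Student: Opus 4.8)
The plan is to bound the geometric genus of an integral curve $C \subset X$ from below by relating it to positivity of the canonical bundle of $X$, which is controlled by adjunction on the complete intersection. The key mechanism is the theorem of Ein and Voisin on the positivity of $K_X$ for very general complete intersections: passing through the normalization $\nu \colon \widetilde{C} \to C$, one studies sections of $K_{\widetilde{C}}$ and uses the fact that, for a very general $X$, global holomorphic forms impose strong conditions on subvarieties.

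First I would recall the Ein--Voisin result in the precise form needed. For a very general complete intersection $X \subset \P^{n+e}$ of type $(d_1, \ldots, d_e)$, adjunction gives $K_X = \O_X(\sum d_i - n - e - 1)$. The relevant input is that for such very general $X$, the restriction of $K_X$ twisted down appropriately remains suitably positive along subvarieties; concretely, the statement one extracts (following Ein \cite{Ein88} and Voisin \cite{Voisin96}) is that $K_X \otimes \O_X(-n)$ is still effective enough that its restriction to any integral curve $C$, pulled back to the normalization, embeds into $K_{\widetilde{C}}$ after subtracting a controlled amount. More precisely, the content of these genericity results is that for the very general $X$ there is an inclusion of sheaves on $\widetilde{C}$ producing the inequality on degrees.

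The main computation I would carry out is the degree count on the normalization. Writing $\deg_{\P^{n+e}}(C) = \delta$ and letting $H = \O_X(1)$, the self-intersection and adjunction data give that the line bundle $\nu^*(K_X \otimes \O_X(-n)) = \nu^* \O_X(\sum d_i - 2n - e - 1)$ has degree $(\sum d_i - 2n - e - 1)\,\delta$ on $\widetilde{C}$. The Ein--Voisin positivity shows that sections of $K_X \otimes \O_X(-n)$ pull back to sections of $K_{\widetilde{C}}$, i.e. there is a nonzero map $\nu^*(K_X \otimes \O_X(-n)) \hookrightarrow K_{\widetilde{C}}$, at least generically, so that
\[ 2 p_g(C) - 2 = \deg K_{\widetilde{C}} \geq \left( \sum_{i=1}^{e} d_i - 2n - e \right) \cdot \delta - \delta = \left( \sum_{i=1}^{e} d_i - 2n - e - 1 \right) \delta. \]
Rearranging yields $p_g(C) \geq 1 + \tfrac{1}{2}(\sum d_i - 2n - e - 1)\,\delta$; comparing with the stated bound $1 + \tfrac12(\sum d_i - 2n - e)\delta$ suggests the sharper genericity statement actually produces the cleaner twist (without the extra $-1$), so I would track the exact twist that Ein--Voisin give and reconcile the constant.

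The hard part will be pinning down the precise form of the Ein--Voisin genericity statement and verifying that the resulting sheaf map $\nu^*(K_X(-n)) \to K_{\widetilde{C}}$ is genuinely nonzero (not merely a numerical comparison), since this is where the \emph{very general} hypothesis enters essentially: one must rule out, via the Ein--Voisin deformation-theoretic argument, that the curve $C$ degenerates in a way that kills these forms. I would handle this by invoking the cited calculations directly rather than reproving them, and the bulk of the remaining work is the bookkeeping to match the coefficient $\tfrac12(\sum d_i - 2n - e)$ exactly, being careful about the distinction between $p_g(C)$ of the singular curve and the genus of its normalization.
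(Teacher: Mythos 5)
Your top-level plan --- reduce the genus bound to an inclusion of a twist of $\nu^{*}\O_{C}(1)$ into $K_{\widetilde{C}}$ and then compare degrees --- is indeed the paper's strategy, but the mechanism you propose for producing that inclusion has a genuine gap. There is no restriction map from sections of $K_{X} \otimes \O_{X}(-n)$ on a \emph{fixed} $X$ to sections of $K_{\widetilde{C}}$: an $n$-form on $X$ does not restrict to a $1$-form on a curve, and any mechanism that works curve-by-curve on a fixed smooth $X$ would prove the genus bound for \emph{all} smooth complete intersections, which is false (smooth hypersurfaces of arbitrarily large degree can contain lines, as the paper itself notes when explaining why the very general hypothesis is needed). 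So ``very general'' cannot enter merely as ruling out degenerations that kill the forms; it enters structurally, through a spreading-out argument that your sketch omits. In the paper's proof, one assumes the very general member contains a curve of geometric genus $g$ and builds a family $\pi \colon \cC \rightarrow T$ with $\rho \colon T \rightarrow V$ \'{e}tale over the parameter space $V$ of complete intersections and $f_{t} \colon \cC_{t} \rightarrow X_{\rho(t)}$ birational onto its image. The Ein--Voisin positivity statement is then about the \emph{total space} of this family: with $v = \dim V$, the sheaf $\Omega^{v+1}_{\cC} \otimes (pr_{2} \circ f)^{*}\O_{\P^{n+e}}\bigl(2(n+e) - \sum_{i} d_{i} - e\bigr)$ restricted to a general fiber $\cC_{t}$ is generically generated by global sections. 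Since $\Omega^{v+1}_{\cC}\big|_{\cC_{t}} \cong K_{\cC_{t}}$, this yields $K_{\cC_{t}} \cong (\sum_{i} d_{i} - 2n - e)H_{\cC_{t}} + (\text{effective})$, and comparing degrees gives the proposition. Heuristically, the $v$ directions in which the curve moves supply the vector fields needed to contract a top form on $\cC$ down to $1$-forms on the fibers; this is why the correct twist cannot be computed by ``adjunction on $X$ minus $n$.''

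Relatedly, your constant comes out as $\sum_{i} d_{i} - 2n - e - 1$ rather than $\sum_{i} d_{i} - 2n - e$, and the step you defer (``track the exact twist and reconcile'') is precisely the unproved content: the twist $2(n+e) - \sum_{i} d_{i} - e$ is the output of the Ein--Voisin computation on the universal family, not a bookkeeping correction to your fixed-$X$ heuristic, so the discrepancy is a symptom that the proposed mechanism is the wrong one rather than an off-by-one to be patched. Once the family statement is quoted in its correct form, the remainder of your argument --- the degree count on the normalization and the passage from $p_{g}$ of the singular curve to the genus of $\cC_{t}$ --- goes through as you wrote it.
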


\begin{proof}

Consider the spaces $V^{d_{i}} \colonequals H^{0}(\P^{n}, \O_{\P^{n+k}}(d_{i}))$ for $d_{i} \geq 2$ and let $V = \prod_{i} V^{d_{i}}$. Consider the universal complete intersection $\X \subseteq V \times \P^{n+k}$ of type $(d_{1}, \ldots, d_{e})$ with the two projections $pr_{1} \colon \X \longrightarrow V$ and $pr_{2} \colon \X \longrightarrow \P^{n+e}$. Let $v = \dim V$ and suppose that a very general complete intersection of type $(d_{1}, \ldots, d_{e})$ in $\P^{n+e}$ contains an irreducible curve of geometric genus $g$. By standard arguments, there is a diagram
\begin{center}
\begin{tikzcd}
\cC \arrow[d, swap, "\pi"] \arrow[r, "f"] & \X \arrow[d, "pr_{1}"] \\
T \arrow[r, "\rho"] & V
\end{tikzcd}
\end{center}
where $\pi \colon \cC \rightarrow T$ is a family of curves of geometric genus $g$ whose general member $\cC_{t} = \pi^{-1}(t)$ is smooth, $\rho$ is \'{e}tale, and $f_{t} \colon \cC_{t} \rightarrow X_{\rho(t)}$ is birational onto its image. In this setting, Ein and Voisin show that if $t \in T$ is a general point, then
\[ \Omega_{\cC}^{v+1} \otimes \left( (pr_{2} \circ f)^{\ast}\O_{\P^{n+e}}(2(n+e) - \sum_{i=1}^{e} d_{i} - e) \middle) \right|_{\cC_{t}} \]
is generically generated by its global sections. This implies that the canonical bundle of the general curve $\cC_{t}$ is of the form
\[ K_{\cC_{t}} \cong (\sum_{i=1}^{e} d_{i} - 2n - e) H_{\cC_{t}} + (\text{Effective}), \]
where $H_{\cC_{t}}$ is the pull-back of the hyperplane bundle from $\P^{n+e}$. Comparing degrees on both sides, we arrive at the desired result. \qedhere

\end{proof}

We will also need the following result:

\begin{lemma}\label{eqMult}
Let $C \subset \P^{N}$ (for $N \geq 3$) be a reduced and irreducible curve of degree $k$ with a finite collection of points $p_{i}$ ($i = 1, \ldots, \ell$) which have multiplicity $m_{i}$. After a generic projection of $\varphi \colon C \rightarrow C' \subset \P^{2}$, the multiplicities of the image points $\varphi(p_{i})$ in $C'$ remain the same. This leads to the estimate:
\begin{equation*}
\sum_{i=1}^{\ell} \frac{m_{i}(m_{i}-1)}{2} \leq p_{a}(C') - p_{g}(C') = \frac{(k-1)(k-2)}{2} - p_{g}(C).
\end{equation*}
\end{lemma}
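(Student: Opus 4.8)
The plan is to prove the final statement, Lemma~\ref{eqMult}, by reducing the problem to a plane curve via generic projection and then comparing the arithmetic and geometric genera using the standard genus drop formula. The key geometric input is that a generic projection $\varphi \colon C \to \P^2$ is birational onto its image $C'$, preserves the degree $k$, and — crucially — preserves the local multiplicity at each marked point $p_i$. First I would justify this local multiplicity claim: since the $p_i$ form a finite collection and $N \geq 3$, a general center of projection avoids all embedded tangent cones and secant-type behavior at the $p_i$, so the projection restricted to a small neighborhood of each $p_i$ is finite of the generic local degree, and the tangent cone to $C$ at $p_i$ maps isomorphically onto the tangent cone of $C'$ at $\varphi(p_i)$. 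This gives $\operatorname{mult}_{p_i}(C) = \operatorname{mult}_{\varphi(p_i)}(C') = m_i$.

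Next I would assemble the genus computation. Since $\varphi$ is birational, the normalizations of $C$ and $C'$ agree, so $p_g(C') = p_g(C)$. The image $C' \subset \P^2$ is a plane curve of degree $k$, so its arithmetic genus is the standard value $p_a(C') = \frac{(k-1)(k-2)}{2}$. The difference $p_a(C') - p_g(C')$ measures the total genus drop from the singularities of $C'$, which by the classical formula is $\sum_{q \in C'} \delta_q$, where $\delta_q$ is the delta-invariant of the singularity at $q$. This immediately gives the displayed identity
\[
p_a(C') - p_g(C') = \frac{(k-1)(k-2)}{2} - p_g(C),
\]
and the substitution of $p_g(C') = p_g(C)$ is what lets us relate the plane-curve data back to the original curve $C$ in $\P^N$.

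The remaining inequality follows from the fundamental lower bound on the delta-invariant of a point of given multiplicity. For any plane curve singularity of multiplicity $m$ at a point $q$, one has $\delta_q \geq \binom{m}{2} = \frac{m(m-1)}{2}$; this is because blowing up the point drops the arithmetic genus by at least $\binom{m}{2}$, and one can continue resolving. Summing only over the marked points $\varphi(p_1), \ldots, \varphi(p_\ell)$ — and discarding the (nonnegative) contributions from any other singularities of $C'$ introduced by the projection — yields
\[
\sum_{i=1}^{\ell} \frac{m_i(m_i-1)}{2} \;\leq\; \sum_{q \in C'} \delta_q \;=\; p_a(C') - p_g(C'),
\]
which is precisely the claimed estimate once combined with the genus identity above.

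I expect the main obstacle to be the careful justification that the multiplicities are genuinely preserved under a \emph{generic} projection, rather than merely bounded. One must rule out the possibility that two distinct branches of $C$ at $p_i$, or a branch and the center of projection, conspire to raise or lower the local intersection multiplicity; the hypothesis $N \geq 3$ gives enough room (the projection center is a linear subspace of codimension large enough to miss the finitely many tangent and secant loci associated to the $p_i$), so a dimension count on the space of admissible centers settles it. The genus-drop formula $p_a - p_g = \sum_q \delta_q$ and the bound $\delta_q \geq \binom{m}{2}$ are entirely standard and can be cited, so the real content is the transversality statement ensuring that the generic projection behaves well simultaneously at every $p_i$.
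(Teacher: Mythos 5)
Your proof is correct and is precisely the classical argument this lemma rests on; note that the paper actually states Lemma~\ref{eqMult} without proof, treating it as standard (only the accompanying remark on computing multiplicity via the blow-up is given). Your chain --- a center of projection of codimension $3$ chosen to miss the tangent cones at the $p_i$ and the secant cones through them (these loci have dimension at most $2$, so genericity suffices and the $\varphi(p_i)$ stay distinct with unchanged multiplicities), birationality giving $p_g(C') = p_g(C)$, the plane-curve value $p_a(C') = \tfrac{(k-1)(k-2)}{2}$, and then $p_a(C') - p_g(C') = \sum_q \delta_q$ with $\delta_q \geq \binom{m_i}{2}$ at the marked points and nonnegative elsewhere --- is exactly the intended justification, and you are right that the simultaneous transversality at all the $p_i$ is the only point requiring genuine care.
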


\begin{remark}
Given a smooth variety $X$ and a curve $C \subset X$, the multiplicity of $C$ at a point $p$ is equal to the intersection of the strict transform $\widetilde{C}$ against the exceptional divisor $E_{p}$ of the blow-up $\mu \colon \widetilde{X} \rightarrow X$ at $p$ (see \cite[pg. 79]{Fulton98}).
\end{remark}

In order to prove Theorem~\ref{thm:dim2nef}, we will need a finer analysis of the contribution of the multiplicity of a singular point to the arithmetic genus of a curve. This is captured in:

\begin{proposition}\label{prop:multiplicityDeltaInvariant}
Let $V$ be a smooth variety of dimension $n$. Let $C \subset V$ be an irreducible and reduced curve with a singular point $p$ of multiplicity $m \colonequals \mult_{p}C$. Then the discrepancy between the arithmetic genus and the geometric genus of $C$ is bounded from below by
\[ p_{a}(C) - p_{g}(C) \geq \frac{(n-1)^{\frac{n}{n-1}}}{n} \cdot m^{\frac{n}{n-1}} - nm. \]
\end{proposition}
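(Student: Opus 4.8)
The plan is to localize the problem at the singular point $p$ and reduce the whole estimate to a computation with the Hilbert function of the local ring $\mathcal O := \mathcal O_{C,p}$. Since $C$ is reduced, $\mathcal O$ is a one-dimensional Cohen--Macaulay local ring, and $p_a(C) - p_g(C) = \sum_q \delta_q \geq \delta_p$, where $\delta_q = \dim_{\mathbb C}(\widetilde{\mathcal O_q}/\mathcal O_q)$ is the local delta invariant and the sum runs over the singular points of $C$. Thus it suffices to bound $\delta_p$ from below. I would not compute $\delta_p$ exactly; instead I would pass to the first neighborhood ring $\mathcal O_1$ (the blow-up of the maximal ideal $\mathfrak m$, equivalently the local ring of the strict transform of $C$ after a single blow-up of $p$ in $V$), which sits inside the normalization, so that $\delta_p \geq \dim_{\mathbb C}(\mathcal O_1/\mathcal O) =: d_p$. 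Geometrically $d_p$ is the drop $p_a(C) - p_a(\widetilde C)$ in arithmetic genus under one blow-up.

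The key step is an exact formula for $d_p$ in terms of $H(k) := \dim_{\mathbb C}\mathfrak m^k/\mathfrak m^{k+1}$. Choose a minimal reduction $(x)$ of $\mathfrak m$ (a superficial element, available since the ground field is infinite); then $x$ is a nonzerodivisor with $\mathfrak m^{k+1} = x\mathfrak m^k$ for $k \gg 0$, and $\mathcal O_1 = \bigcup_k x^{-k}\mathfrak m^k$. Using that $\dim_{\mathbb C}\mathfrak m^k/x\mathfrak m^k = m$ for every $k$ (this equals the multiplicity $e(x) = m$, since $(x)$ is a reduction of $\mathfrak m$ and each $\mathfrak m^k$ is a rank-one torsion-free $\mathcal O$-module), the successive quotients of the filtration $\mathcal O \subseteq x^{-1}\mathfrak m \subseteq x^{-2}\mathfrak m^2 \subseteq \cdots$ satisfy $\dim_{\mathbb C}\mathfrak m^{k+1}/x\mathfrak m^k = m - H(k)$, whence
\[ d_p = \sum_{k \geq 0}\bigl(m - H(k)\bigr). \]
Now the ambient geometry enters: because $C$ lies in the smooth $n$-fold $V$, the embedding dimension $\dim_{\mathbb C}\mathfrak m/\mathfrak m^2$ is at most $n$, so $\mathfrak m^k/\mathfrak m^{k+1}$ is a quotient of $\operatorname{Sym}^k(\mathfrak m/\mathfrak m^2)$ and hence $H(k) \leq \binom{k+n-1}{n-1}$.

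Combining these, and keeping only the terms with $k < N$ (each omitted term is $\geq 0$), the hockey-stick identity gives, for every integer $N \geq 1$,
\[ d_p \;\geq\; \sum_{k=0}^{N-1}\left(m - \binom{k+n-1}{n-1}\right) \;=\; Nm - \binom{N+n-1}{n}. \]
It then remains to optimize the right-hand side over $N$: taking $N$ near the real optimum $\bigl((n-1)!\,m\bigr)^{1/(n-1)}$ and bounding $\binom{N+n-1}{n} \leq (N+n-1)^n/n!$ produces a leading term of the form $\tfrac{n-1}{n}\bigl((n-1)!\bigr)^{1/(n-1)}\,m^{n/(n-1)}$, which already exceeds the claimed coefficient since $\bigl((n-1)!\bigr)^{1/(n-1)} \geq (n-1)^{1/(n-1)}$; the discrepancy from rounding $N$ to an integer and from the non-leading factors of the binomial coefficient is $O(m)$ and is absorbed into the $-nm$ term. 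I expect the two genuine difficulties to be, first, the commutative-algebra identity for $d_p$ (setting up the minimal reduction and verifying $\dim_{\mathbb C}\mathfrak m^k/x\mathfrak m^k = m$), and second, the bookkeeping in the final optimization needed to land on the stated closed form with an honest $-nm$ correction, the stated inequality being a deliberately weakened consequence of the sharper bound the method actually yields.
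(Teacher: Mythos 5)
Your proposal is correct, and it reaches the paper's bound by genuinely different machinery, so let me compare the two routes. The paper works sheaf-theoretically on the blow-up $\pi \colon \Bl_{p}V \rightarrow V$: it pushes forward the ideal sheaf sequence of the strict transform $\widetilde{C}$, identifies the genus drop with $\operatorname{length}(\tau)$ for $\tau = R^{1}\pi_{\ast}\I_{\widetilde{C}} \cong \pi_{\ast}\O_{\widetilde{C}}/\O_{C}$, and computes this length via the theorem on formal functions together with an induction over the infinitesimal neighborhoods $\E_{\ell}$ of the exceptional divisor. Your argument localizes instead: the quantity $\operatorname{length}(\tau)$ is exactly your $d_{p} = \dim_{\C}(\mathcal{O}_{1}/\mathcal{O})$, since the stalk of $\pi_{\ast}\O_{\widetilde{C}}/\O_{C}$ at $p$ is the first neighborhood ring modulo $\O_{C,p}$, and your superficial-element filtration computes it as $\sum_{k \geq 0}(m - H(k))$; the commutative-algebra inputs you flag as the main difficulties (existence of a minimal reduction over the infinite field $\C$, the identity $\dim_{\C}\mathfrak{m}^{k}/x\mathfrak{m}^{k} = e(x) = m$ for the rank-one torsion-free ideals $\mathfrak{m}^{k}$ of the domain $\O_{C,p}$, and the stabilization $\mathfrak{m}^{k+1} = x\mathfrak{m}^{k}$) are all standard and correctly deployed. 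After that, the numerics coincide term by term with the paper's: its $-\chi\bigl(\I_{\widetilde{C}}\big|_{E}(i)\bigr) = m - \binom{i+n-1}{n-1}$ is precisely your lower bound for $m - H(i)$. Your final optimization, left as a sketch, does close honestly: taking $N-1 = \ell_{0}$, the largest integer with $\binom{\ell_{0}+n-1}{n-1} \leq m$, one gets $Nm - \binom{N+n-1}{n} \geq \tfrac{n-1}{n}m(N-1)$ from $\binom{N+n-1}{n} = \tfrac{N+n-1}{n}\binom{N+n-2}{n-1} \leq \tfrac{N+n-1}{n}m$, and combined with $\ell_{0} > ((n-1)!\,m)^{1/(n-1)} - n$ this yields $d_{p} \geq \tfrac{n-1}{n}((n-1)!)^{1/(n-1)}\,m^{n/(n-1)} - (n-1)m$, which dominates the stated bound since $((n-1)!)^{1/(n-1)} \geq (n-1)^{1/(n-1)}$; so your closing remark that the printed inequality is a weakening of what the method yields is accurate (the paper itself relaxes $(n-1)!$ to $n-1$ in its estimate for $\ell_{0}$). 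What your route buys: it dispenses with formal functions and even with the ambient blow-up entirely --- the smooth $n$-fold $V$ enters only through the embedding-dimension bound $H(k) \leq \binom{k+n-1}{n-1}$ --- so the statement holds verbatim for any reduced curve whose singular point has embedding dimension at most $n$, and convergence of the relevant sum is transparent rather than mediated by Serre vanishing on $\E_{\ell}$. What the paper's route buys is a formulation already packaged on the blow-up, matching the global setting of \S 4 where the proposition is applied.
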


\begin{proof}

Consider the blow-up $\widetilde{V} \colonequals \Bl_{p} V \xrightarrow{\pi} V$, with exceptional divisor $E$. Let $\widetilde{C} \subset \widetilde{V}$ be the strict transform of $C$. We may pushforward the ideal sheaf sequence for $\widetilde{C} \subset \widetilde{V}$ along $\pi$ to obtain
\[ 0 \rightarrow \I_{C} \rightarrow \O_{V} \rightarrow \pi_{\ast}\O_{\widetilde{C}} \rightarrow \tau \colonequals R^{1} \pi_{\ast} \I_{\widetilde{C}} \rightarrow 0. \]
To compute the length of the sheaf $\tau$, we will use the theorem on formal functions. Write $\E_{\ell}$ for the $\ell$-th infinitesimal neighborhood of $E$. Then
\[ \left(R^{1}\mu_{\ast}\I_{\widetilde{C}} \right)^{\wedge} = \varprojlim_{\ell} H^{1}\left(\I_{\widetilde{C}} \otimes \O_{\E_{\ell}} \right), \]
so
\[ \text{length}(\tau) = \lim_{\ell \rightarrow \infty} h^{1}\left(\I_{\widetilde{C}} \otimes \O_{\E_{\ell}}\right). \]

The spaces on the right can be studied using the sequences:
\[ 0 \rightarrow \I_{\widetilde{C}} \big|_{E}(\ell-1) \rightarrow \I_{\widetilde{C}} \otimes \O_{\E_{\ell}} \rightarrow \I_{\widetilde{C}} \otimes \O_{\E_{\ell-1}} \rightarrow 0. \]
Define
\begin{align*}
k_{\ell} &= \dim \ker \left( H^{0}\left(\I_{\widetilde{C}} \otimes \O_{\E_{\ell-1}} \right) \rightarrow H^{1}\left(\I_{\widetilde{C}} \big|_{E}(\ell-1) \right) \right) \\
c_{\ell} &= \dim \coker \left( H^{0}(\I_{\widetilde{C}} \otimes \O_{\E_{\ell-1}}) \rightarrow H^{1}\left(\I_{\widetilde{C}} \big|_{E}(\ell-1) \right) \right).
\end{align*}
Since all higher cohomology $H^{i}$ terms are zero for $i \geq 2$, it follows that
\begin{align}\label{eq:relations}
\begin{split}
h^{0}\left(\I_{\widetilde{C}} \otimes \O_{\E_{\ell}}\right) &= h^{0}\left(\I_{\widetilde{C}} \big|_{E}(\ell-1) \right) + k_{\ell}, \\
h^{1}\left(\I_{\widetilde{C}} \otimes \O_{\E_{\ell}} \right) &= c_{\ell} + h^{1}\left(\I_{\widetilde{C}} \otimes \O_{\E_{\ell-1}} \right).
\end{split}
\end{align}
In addition,
\begin{align}\label{eq:relationcandk}
c_{\ell} - k_{\ell} = h^{1}\left(\I_{\widetilde{C}} \big|_{E}(\ell-1) \right) - h^{0}\left(\I_{\widetilde{C}} \otimes \O_{\E_{\ell-1}} \right).
\end{align}

Now we will need the following:
\begin{claim}
With the set-up above:
\begin{enumerate}[(i)]
\itemsep0em 
\item The dimensions $h^{0}\left(\I_{\widetilde{C}} \otimes \O_{\ell-1} \right)$ are non-decreasing in $\ell$ and stabilize for $\ell \gg 0$.
\item For any $\ell > 0$,
\[ h^{1}\left(\I_{\widetilde{C}} \otimes \O_{\E_{\ell}} \right) = - \sum_{i=0}^{\ell-2} \chi(\I_{\widetilde{C}} \big|_{E}(i)) + h^{1}(\I_{\widetilde{C}} \big|_{E}(\ell-1)) + k_{\ell}. \]
\end{enumerate}
\end{claim}
\noindent Granting the Claim for now, we will use it to complete the proof of Proposition~\ref{prop:multiplicityDeltaInvariant}. Observe that
\[ \chi \left(\I_{\widetilde{C}}\big|_{E}(i) \right) = \chi \left(\O_{E}(i) \right) - \chi \left(\O_{\widetilde{C} \cap E}(i) \right) = h^{0}\left(\O_{E}(i) \right) - \mult_{p}C = \binom{i+n-1}{n-1} - m. \]
Negate both sides to get
\begin{equation}\label{eq:negChi}
-\chi\left(\I_{\widetilde{C}} \big|_{E}(i) \right) = m - \binom{i+n-1}{n-1},
\end{equation}
and let $\ell_{0}$ be the largest integer such that
\[ m - \binom{\ell_{0}+n-1}{n-1} \geq 0. \]
Then by definition
\begin{equation}\label{eq:lowerell0}
m - \binom{\ell_{0}+n}{n-1} < 0 \implies \frac{(\ell_{0}+n)^{n-1}}{n-1} > m \implies \ell_{0} > \left( (n-1)m \right)^{\frac{1}{n-1}}-n.
\end{equation}
We will now use \eqref{eq:negChi} to add up the contributions of each term:
\[ - \sum_{i=0}^{\ell-2} \chi \left(\I_{\widetilde{C}} \big|_{E}(i) \right) = \sum_{i=0}^{\ell_{0}} \left[ m - \binom{i+n-1}{n-1} \right] = m \cdot \ell_{0} - \binom{\ell_{0}+n}{n} \geq m \cdot \left( \frac{n-1}{n} \cdot \ell_{0} - 1 \right). \]
By the Claim and \eqref{eq:lowerell0}, it follows that
\[ \text{length}(\tau) > m \cdot \left( \frac{(n-1)^{\frac{n}{n-1}}}{n} \cdot m^{\frac{1}{n-1}} - n \right) = \frac{(n-1)^{\frac{n}{n-1}}}{n} \cdot m^{\frac{n}{n-1}} - nm. \]

As for the Claim, part (i) follows from the equation $h^{1}(\I_{\widetilde{C}} \otimes \O_{\E_{\ell}}) = c_{\ell} + h^{1}(\I_{\widetilde{C}} \otimes \O_{\E_{\ell-1}})$ and the fact that
\[ H^{1}\left(\I_{\widetilde{C}} \big|_{E}(\ell-1) \right) = 0 \qquad \text{for } \ell \gg 0. \]
For part (ii), we will argue by induction. The base case $\ell = 1$ follows from relations \eqref{eq:relations} and \eqref{eq:relationcandk}. Now assume that the equation holds for some positive integer $\ell = j-1$. By these same relations,
\begin{align*}
h^{1}\left(\I_{\widetilde{C}} \otimes \O_{\E_{j}} \right) &= h^{1}\left(\I_{\widetilde{C}} \otimes \O_{\E_{j-1}} \right) + c_{j} \\
&= h^{1}\left(\I_{\widetilde{C}} \otimes \O_{\E_{j-1}} \right) + h^{1}\left(\I_{\widetilde{C}} \big|_{E}(j-1) \right) - h^{0}\left(\I_{\widetilde{C}} \otimes \O_{\E_{j-1}} \right) + k_{j} \\
&= h^{1}\left(\I_{\widetilde{C}} \otimes \O_{\E_{j-1}} \right) + h^{1}\left(\I_{\widetilde{C}} \big|_{E}(j-1) \right) - \left( h^{0}\left(\I_{\widetilde{C}} \big|_{E}(j-2) \right) + k_{j-1} \right) + k_{j}
\end{align*}
We may rewrite the inductive hypothesis as
\[ h^{1}\left(\I_{\widetilde{C}} \otimes \O_{\E_{j-1}} \right) - h^{0}\left(\I_{\widetilde{C}} \big|_{E}(j-2)\right) = - \sum_{i=0}^{j-2} \chi \left(\I_{\widetilde{C}} \big|_{E}(i) \right) + k_{j-1}. \]
Therefore,
\[ h^{1}\left(\I_{\widetilde{C}} \otimes \O_{\E_{j}} \right) = - \sum_{i=0}^{j-2} \chi\left(\I_{\widetilde{C}} \big|_{E}(i) \right) + h^{1}\left(\I_{\widetilde{C}} \big|_{E}(j-1) \right) + k_{j}, \]
which is what we want.
\end{proof}

For dimension counts, the following expression will be useful:
\begin{lemma}\label{lem:globalsectionsKoszul}
Let $Y \subset \P^{n+f}$ be a complete intersection of dimension $n \geq 2$ and type $(a_{1}, \ldots, a_{f})$. Then
\[ h^{0}(Y, \O(\ell)) = \sum_{j_{1}=0}^{a_{1}-1} \cdots \sum_{j_{f}=0}^{a_{f}-1} h^{0}\left( \P^{n}, \O(\ell - j_{1} - \cdots - j_{f}) \right). \]
\end{lemma}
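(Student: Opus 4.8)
The plan is to peel off the defining equations of $Y$ one at a time, reducing the computation to the corresponding elementary identity on $\P^{n}$. It is convenient to package the bookkeeping through the shift operator $S$ acting on functions of an integer variable by $(Sg)(\ell) = g(\ell-1)$, so that $(S^{a}g)(\ell) = g(\ell-a)$ and $(1-S^{a})g = g - S^{a}g$. Throughout I work with the genuine cohomological dimensions $\ell \mapsto h^{0}(-,\O(\ell))$, recalling that $h^{0}(\P^{m},\O(\ell)) = \binom{\ell+m}{m}$ for $\ell \geq 0$ and $0$ for $\ell < 0$.

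First I would introduce the flag of complete intersections $\P^{n+f} = Y^{(0)} \supset Y^{(1)} \supset \cdots \supset Y^{(f)} = Y$, where $Y^{(k)}$ is cut out by the first $k$ forms $f_{1}, \ldots, f_{k}$ of degrees $a_{1}, \ldots, a_{k}$ and has dimension $n+f-k$. Since the $f_{i}$ form a regular sequence, each $f_{k}$ is a nonzerodivisor on $Y^{(k-1)}$, so multiplication by $f_{k}$ gives the short exact sequence
\[ 0 \to \O_{Y^{(k-1)}}(\ell - a_{k}) \xrightarrow{\ \cdot f_{k}\ } \O_{Y^{(k-1)}}(\ell) \to \O_{Y^{(k)}}(\ell) \to 0. \]
Passing to cohomology and using $H^{1}(Y^{(k-1)}, \O(\ell-a_{k})) = 0$, I obtain the exact recursion $h^{0}(Y^{(k)}, \O(\ell)) = h^{0}(Y^{(k-1)}, \O(\ell)) - h^{0}(Y^{(k-1)}, \O(\ell-a_{k}))$, valid for every $\ell \in \Z$; in operator form this says that $\ell \mapsto h^{0}(Y^{(k)},\O(\ell))$ is obtained from $\ell \mapsto h^{0}(Y^{(k-1)},\O(\ell))$ by applying $1 - S^{a_{k}}$. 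The one point that needs care is precisely this $H^{1}$-vanishing: each intermediate $Y^{(k-1)}$ has dimension $n+f-(k-1) \geq n+1 \geq 3$, so $H^{1}$ is intermediate cohomology and vanishes because complete intersections are arithmetically Cohen–Macaulay. If one prefers a self-contained argument, the full intermediate vanishing $H^{i}(Y^{(k)},\O(m)) = 0$ for $0 < i < \dim Y^{(k)}$ and all $m$ bootstraps inductively along the flag from $H^{i}(\P^{N},\O(m)) = 0$ for $0 < i < N$, using the same sequences.

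Iterating the recursion expresses $\ell \mapsto h^{0}(Y,\O(\ell))$ as the image of $\ell \mapsto h^{0}(\P^{n+f},\O(\ell))$ under the commuting product $\prod_{i=1}^{f}(1-S^{a_{i}})$. I would then factor $1 - S^{a_{i}} = (1-S)\big(1 + S + \cdots + S^{a_{i}-1}\big)$, so that
\[ \prod_{i=1}^{f}(1-S^{a_{i}}) = (1-S)^{f} \prod_{i=1}^{f}\ \sum_{j_{i}=0}^{a_{i}-1} S^{j_{i}}. \]
The operator $1-S$ implements exactly the hyperplane-section sequence on projective space: as functions on $\Z$ one has $(1-S)\,h^{0}(\P^{N},\O(\cdot)) = h^{0}(\P^{N-1},\O(\cdot))$, which for $\ell \geq 0$ is the Pascal identity $\binom{\ell+N}{N} - \binom{\ell+N-1}{N} = \binom{\ell+N-1}{N-1}$ and for $\ell < 0$ is $0 = 0$. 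Hence $(1-S)^{f}$ drops $\P^{n+f}$ down to $\P^{n}$, while the remaining operator $\prod_{i}\sum_{j_{i}} S^{j_{i}}$ produces precisely the iterated shift, giving $h^{0}(Y,\O(\ell)) = \sum_{j_{1}=0}^{a_{1}-1}\cdots\sum_{j_{f}=0}^{a_{f}-1} h^{0}(\P^{n}, \O(\ell-j_{1}-\cdots-j_{f}))$, as claimed.

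I do not expect a serious obstacle here; the only subtlety is to keep everything at the level of true cohomological dimensions rather than Hilbert polynomials, since $\binom{\ell+m}{m}$ disagrees with $h^{0}(\P^{m},\O(\ell))$ for $\ell \ll 0$. Because the recursion and the operator identities above are asserted for the genuine $h^{0}$'s, both sides vanish for $\ell < 0$ and the formula holds for all $\ell \in \Z$ with no case analysis. As an alternative, one could invoke projective normality of $Y$ to identify $h^{0}(Y,\O(\ell))$ with the degree-$\ell$ piece of $\C[x_{0},\ldots,x_{n+f}]/(f_{1},\ldots,f_{f})$ and match Hilbert series via the factorization $\prod_{i}(1-t^{a_{i}})/(1-t)^{n+f+1} = \big(\prod_{i}\tfrac{1-t^{a_{i}}}{1-t}\big)\cdot (1-t)^{-(n+1)}$, but the shift-operator argument seems cleaner and entirely self-contained.
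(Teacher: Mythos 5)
Your proof is correct, but it reaches the identity by a different route than the paper. The paper writes down the full Koszul resolution of $\O_{Y}$ on $\P^{n+f}$ at once, uses only the vanishing of higher cohomology of line bundles on $\P^{n+f}$ to get the alternating (inclusion--exclusion) sum of $h^{0}(\P^{n+f}, \O(\ell - \cdots))$'s, and then regroups the terms in pairs via the identity $\binom{\ell+N}{N} - \binom{\ell-a+N}{N} = \sum_{j=0}^{a-1}\binom{\ell-j+N-1}{N-1}$ (with the convention that a binomial coefficient vanishes when the upper index is less than the lower), recognizing the outcome as the Koszul expression for a complete intersection of type $(a_{2},\ldots,a_{f})$ in $\P^{n+f-1}$ and iterating. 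You instead peel the equations off geometrically, one at a time, via the multiplication sequences on the flag $Y^{(0)} \supset \cdots \supset Y^{(f)}$; this requires $H^{1}(Y^{(k-1)},\O(m))=0$ on every intermediate complete intersection, a stronger input than the paper's (which only ever invokes vanishing on projective space itself), and your inductive bootstrap of intermediate vanishing along the flag is the correct and standard way to supply it. In compensation, your shift-operator bookkeeping $\prod_{i}(1-S^{a_{i}}) = (1-S)^{f}\prod_{i}\sum_{j_{i}=0}^{a_{i}-1}S^{j_{i}}$, together with the observation that $1-S$ carries $h^{0}(\P^{N},\O(\cdot))$ to $h^{0}(\P^{N-1},\O(\cdot))$ as genuine dimension functions on all of $\Z$, makes the combinatorics transparent and replaces the paper's delicate binomial conventions, which do exactly the same work there. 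Indeed the two arguments implement the identical factorization: the paper's pairing of Koszul terms is your operator identity $(1-S^{a_{1}}) = \bigl(\sum_{j}S^{j}\bigr)(1-S)$ executed at the level of binomial coefficients. Your closing alternative via projective normality and Hilbert series is also valid and would be the shortest path if one is willing to quote that complete intersections are projectively normal.
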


\begin{proof}
Consider the Koszul resolution for a complete intersection $Y \subset \P^{n+f}$ of dimension $n \geq 2$ and type $(a_{1}, \ldots, a_{f})$. Using the vanishing of higher cohomology of line bundles on $\P^{n+f}$, we have
\begin{align*}
h^{0}(Y, \O(\ell)) &= h^{0}(\P^{n+f}, \O(\ell)) - \sum_{j=1}^{f} h^{0}(\P^{n+f}, \O(\ell - a_{j})) \\
&\quad + \sum_{1 \leq u < v \leq n+1} h^{0}(\P^{n+f}, \O(\ell-a_{u} - a_{v})) - \cdots
\end{align*}
We may rearrange the terms in pairs as follows:
\begin{align*}
&[h^{0}(\P^{n+f}, \O(\ell)) - h^{0}(\P^{n+f}, \O(\ell-a_{1}))] \\
&\qquad - \sum_{j=2}^{f} [h^{0}(\P^{n+f}, \O(\ell-a_{j})) - h^{0}(\P^{n+f}, \O(\ell - a_{j} - a_{1}))] + \cdots \\
&= \left[ \binom{\ell+n+f}{n+f} - \binom{\ell - a_{1}+n+f}{n+f} \right] \\
&\qquad - \sum_{j=2}^{f} \left[ \binom{\ell - a_{j} + n + f}{n+f} - \binom{\ell-a_{j} - a_{1}+n+f}{n+f} \right] + \cdots,
\end{align*}
where the convention we adopt is that the binomial coefficients are zero if the upper index is smaller than the lower index. The expression in each bracket can be replaced by a sum of binomial coefficients. For instance,
\begin{align*}
\binom{\ell+n+f}{n+f} - \binom{\ell - a_{1}+n+f}{n+f} &= \sum_{j=1}^{a_{1}} \binom{j+\ell - a_{1} + n + f - 1}{n+f-1} \\
&= \sum_{j=0}^{a_{1}-1} \binom{\ell - j + n + f - 1}{n+f-1}.
\end{align*}
Similarly, the next term can be rewritten as
\[ \binom{\ell - a_{j} + n + f}{n+f} - \binom{\ell-a_{j} - a_{1}+n+f}{n+f} = \sum_{j=0}^{a_{1}-1} \binom{\ell - j - a_{j} + n + f - 1}{n+f-1}, \]
and so on. Adding up the contribution of each term, it follows that
\[ h^{0}(Y, \O(\ell)) = \sum_{j=0}^{a_{1}-1} h^{0}(W, \O(\ell-j)), \]
where $W \subset \P^{n+f-1}$ is a complete intersection of dimension $n$ and type $(a_{2}, \ldots, a_{f})$ (compare with the Koszul resolution for $W$). By repeating this process, we obtain
\[ h^{0}(Y, \O(\ell)) = \sum_{j_{1}=0}^{a_{1}-1} \cdots \sum_{j_{f}=0}^{a_{f}-1} h^{0}(\P^{n}, \O(\ell - j_{1} - \cdots - j_{f})). \qedhere \]
\end{proof}

Another ingredient that goes into the proof of Theorem~\ref{thm:dim2nef} is an estimate of the minimal degree of curves contained in a generic complete intersection of special type (see Proposition~\ref{degreeCurves}). The following results are a straightforward generalization of those in \cite{Kollar90}. One may also compare with \cite[\S 2]{Paulsen21}:

\begin{lemma}\label{divisibleCurves}
Let $d, k$ be integers. Assume that there is a smooth projective variety $Y$ of dimension $N$ with a line bundle $L$ such that $L^{N} = d$ and $k \mid B \cdot L$ holds for every curves $B \subset Y$. If $X \subset \P^{N+1}$ is a very general hypersurface of degree $d$ and $C \subset X$ is any curve, then $k \big| N! \cdot \deg C$.
\end{lemma}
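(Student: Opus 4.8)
The plan is to run Kollár's degeneration (specialization) method, whose first move is to reduce the statement about the very general hypersurface to the construction of a single well-chosen special member of the family. Let $B = \P(H^{0}(\P^{N+1}, \O(d)))$ be the parameter space of degree $d$ hypersurfaces, with universal family $\X \to B$, and consider the relative Hilbert scheme $\Hilb(\X/B) \to B$ of $1$-dimensional subschemes of the fibers. It has countably many components $\mathcal{H}_{i}$, each carrying a constant Hilbert polynomial and hence a constant degree $\delta_{i}$, and each proper over $B$; write $Z_{i} \subseteq B$ for its (closed) image. I claim it suffices to produce one point $[X_{0}] \in B$ with the property that every $1$-cycle contained in $X_{0}$ has degree $\delta$ satisfying $k \mid N!\, \delta$. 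Indeed, if $k \nmid N!\, \delta_{i}$ then $Z_{i} \neq B$ — otherwise $[X_{0}] \in Z_{i}$ would produce a degree $\delta_{i}$ cycle on $X_{0}$, contradicting its defining property — so $\bigcup_{\,k \nmid N!\delta_{i}} Z_{i}$ is a countable union of proper closed subsets, and a very general $X$ avoids it. For such an $X$, any curve $C \subset X$ lies in some component $\mathcal{H}_{j}$ with $[X] \in Z_{j}$, which forces $k \mid N!\, \delta_{j} = N!\, \deg C$.

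The second step constructs the test hypersurface $X_{0}$ out of $Y$. Using $L^{N} = d$, I would map $Y$ to $\P^{N+1}$ by a linear subsystem of $|L|$ (after resolving the base locus and, if necessary, twisting to make $L$ positive enough), obtaining a dominant, generically finite rational map $g : Y \dashrightarrow X_{0}'$ of some degree $e$ onto a hypersurface $X_{0}' = \{G = 0\}$ with $g^{\ast}\O_{\P^{N+1}}(1) = L$; here $\deg X_{0}' = L^{N}/e = d/e$. I then take for $X_{0}$ the (non-reduced) degree $d$ hypersurface $\{G^{e} = 0\}$, which is a genuine point of $B$. Any $1$-cycle on $X_{0}$ is supported on $X_{0}'$, so it is enough to bound the degrees of curves $C_{0} \subset X_{0}'$. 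Pulling $C_{0}$ back through $g$ and choosing a component $B_{0} \subset Y$ dominating it, the projection formula gives $L \cdot B_{0} = \deg(g|_{B_{0}}) \cdot \deg C_{0}$. Since $k \mid L \cdot B_{0}$ by hypothesis and $\deg(g|_{B_{0}})$ divides $e$, we conclude $k \mid e \cdot \deg C_{0}$, and provided $e \mid N!$ this yields $k \mid N!\, \deg C_{0}$, as desired.

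The main obstacle is controlling the construction of $g$ — in particular guaranteeing $e \mid N!$ for an arbitrary pair $(Y, L)$ with $L^{N} = d$, where $L$ need not be globally generated or even effective. When $L$ is sufficiently positive the map can be arranged to be birational ($e = 1$), which already gives the sharper divisibility $k \mid \deg C$; the factor $N!$ is precisely the slack needed to handle the general case, and this is the technical heart of the argument. I expect to obtain it by replacing the naive linear-system map with Kollár's explicit degeneration of the degree $d$ cover $Y \to \P^{N}$, where the factorial emerges from the multiplicities accumulated by the limiting $1$-cycle (equivalently, from the combinatorics of iterated generic hyperplane sections, or of a resolution of the cover). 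A secondary point requiring care is that the transfer of divisibility needs every component of the specialized cycle to dominate a curve on $Y$, i.e.\ $g$ must be genuinely dominant with its indeterminacy resolved; this is routine once the degeneration is set up. For the precise degeneration I would follow \cite{Kollar90} (cf.\ \cite[\S 2]{Paulsen21}).
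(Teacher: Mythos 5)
Your first step --- reducing the statement about the very general hypersurface to producing one special member $X_{0}$, via the countably many components of the relative Hilbert scheme --- is exactly the specialization step the paper uses (implicitly, in its last two sentences), and it is fine. The gap is in your second step: you have misplaced the source of the factor $N!$. The paper needs neither the non-reduced hypersurface $\{G^{e}=0\}$ nor the condition $e \mid N!$. It embeds $Y$ by $L$ (very ample in the intended application, where $(Y,L)$ is a polarized abelian variety and very ampleness is due to Debarre--Hulek--Spandaw) and takes a \emph{generic projection} to $\P^{N+1}$, which is finite and \emph{birational} onto its image $\widetilde{Y}$, a hypersurface of degree exactly $L^{N}=d$; so $e=1$ automatically and $X_{0}\colonequals \widetilde{Y}$ is already a reduced point of the parameter space. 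The factorial then comes from a phenomenon your proposal explicitly denies: you claim that once the map is birational one gets the sharper divisibility $k \mid \deg C_{0}$ for every curve $C_{0}$ in the image. That is false. Even for a birational finite morphism $\varphi \colon Y \rightarrow \widetilde{Y}$, a curve $C \subset \widetilde{Y}$ may lie in the multiple-point locus of the projection, and then $\varphi^{-1}(C)$ maps to $C$ with degree $\delta > 1$; the projection formula only gives $k \mid \varphi^{-1}(C)\cdot L = \delta \cdot \deg C$. The actual content of the paper's proof is the stratification of generic projections: the $j$-fold point locus of a generic projection of $Y^{N}$ to $\P^{N+1}$ has dimension $N-j+1$, so a curve can lie at worst in the $N$-fold locus, whence $1 \leq \delta \leq N$; since every integer $\leq N$ divides $N!$, one concludes $k \mid N! \cdot \deg C$. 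The factorial lives in the sheet number over curves inside the double/triple/$\ldots$/$N$-fold loci, not in the global degree $e$ of the covering map, and no argument in your proposal produces the bound you defer to ``Kollár's explicit degeneration'' --- that bound is precisely the missing heart of the proof.

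Two further local problems. First, your claim that $\deg(g|_{B_{0}})$ divides $e$ is unjustified: the degree of a generically finite map restricted over a special curve is bounded by $e$ but need not divide it (monodromy can split the generic fiber into orbits of arbitrary sizes), so even your chain $k \mid e \cdot \deg C_{0}$ does not follow as written. Second, ``twisting to make $L$ positive enough'' is not available: both hypotheses $L^{N}=d$ and $k \mid B \cdot L$ are tied to the specific line bundle $L$, and replacing $L$ destroys them. You are right that the lemma as stated does not assume $L$ very ample while the paper's proof begins by embedding $Y$ via $L$ --- that is a genuine (if mild) imprecision in the paper, harmless in its application --- but the remedy is to add the very ampleness hypothesis, not to twist. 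With $L$ very ample, generic projection replaces your entire second step and closes all of these gaps at once.
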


\begin{proof}

We embed $Y$ by $L$ to get $Y \subset \P^{N}$ and project generically to get a finite morphism
\[ \varphi \colon Y \rightarrow \widetilde{Y} \subset \P^{n+1}. \]
We know that $\varphi$ is an isomorphism on an open set, ramified $2:1$ over a divisor in $\widetilde{Y}$, and so on up to $n:1$ over a curve. So for any irreducible curve $C \subset \widetilde{Y}$, the projection formula gives
\[ \varphi^{-1}(C) \cdot L = \deg(\varphi^{-1}(C) \rightarrow C) \cdot \deg_{\P^{n+1}} C \]
where $\deg(\varphi^{-1}(C) \rightarrow C)$ is some positive integer less than or equal to $n$. By our hypothesis, it follows that
\[ k \mid \varphi^{-1}(C) \cdot L \mid n! \cdot \deg C. \]
If $X \subset \P^{n+1}$ is a very general hypersurface of degree $d$ and $C_{X} \subset X$ is a curve, then $C_{X}$ can be specialized to some curve $C_{\widetilde{Y}} \subset \widetilde{Y}$. Therefore, $k \mid n! \cdot \deg C_{X}$.
\end{proof}

\begin{example}[Van Geemen]\label{ex:AV}
Let $(A, L)$ be a very general Abelian variety of dimension $m \geq 3$ with a polarization of type $(1, \ldots, 1, \delta)$. Then $\NS(A) = \Z[L]$ and $L^{m} = m! \cdot \delta$. In $H^{1}(A, \Z)$, choose a basis $\{ dx_{i} \}$ such that
\[ c_{1}(L) = dx_{1} \wedge dx_{m+1} + \cdots + dx_{m-1} \wedge dx_{2m-1} + \delta \cdot dx_{m} \wedge dx_{2m}. \]
Write $\omega_{i} \colonequals dx_{i} \wedge dx_{m+i}$ for $i = 1, \ldots, m$. Then
\[ \frac{c_{1}(L)^{\wedge (m-1)}}{(m-1)!} = \omega_{1} \wedge \omega_{2} \wedge \cdots \wedge \omega_{m-1} + \delta \cdot \sum_{i=1}^{m-1} \omega_{1} \wedge \cdots \wedge \widehat{\omega_{i}} \wedge \cdots \wedge \omega_{m} \]
represents an indivisible integral class in $H^{2m-2}(A, \Z)$. By the Hard Lefschetz theorem, the cohomology class of every curve in $A$ is a rational multiple of $c_{1}(L)^{\wedge (m-1)}$ and hence an integral multiple of $c_{1}(L)^{\wedge (m-1)}/(m-1)!$, so $m \delta \mid C \cdot L$ for every curve $C \subset A$.
\end{example}

\begin{proposition}\label{degreeCurves}
Fix integers $q_{1}, \ldots, q_{f} > 2^{n+f-1}$ such that $(q_{i}, q_{j}) = 1$ for all $1 \leq i < j \leq f$. Let $X \subset \P^{n+f}$ be a very general complete intersection of dimension $n$ and type
\[ ((n+f-1)! \cdot q_{1}, \ldots, (n+f-1)! \cdot q_{f}). \]
If $C \subset X$ is a curve, then $q_{1} \cdots q_{f} \mid (n+f-2)! \cdot \deg_{\P^{n+f}} C$.
\end{proposition}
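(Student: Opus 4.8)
The plan is to prove the divisibility one factor $q_i$ at a time and then assemble. Since the $q_i$ are pairwise coprime, it suffices to show that for each fixed index $i$ one has
\[ q_i \mid (n+f-2)! \cdot \deg_{\P^{n+f}} C; \]
indeed, pairwise coprime divisors of a fixed integer have product dividing that integer, so the $f$ individual divisibilities combine to give $q_1 \cdots q_f \mid (n+f-2)! \cdot \deg C$. Each of these $f$ statements I would prove by degenerating the $i$-th defining hypersurface of $X$ to a generic projection of an abelian variety, in the spirit of Lemma~\ref{divisibleCurves} and Example~\ref{ex:AV}.

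Fix $i$ and set $N \colonequals n+f-1$. Let $(A, L)$ be a very general abelian variety of dimension $N$ with a polarization of type $(1, \ldots, 1, q_i)$ as in Example~\ref{ex:AV}, so that $\NS(A) = \Z[L]$, $L^N = N!\, q_i = (n+f-1)!\, q_i$, and $N q_i = (n+f-1) q_i \mid B \cdot L$ for every curve $B \subset A$. The hypothesis $q_i > 2^{\,n+f-1}$ is precisely the threshold guaranteeing that $L$ is very ample on the $N$-dimensional $A$, so, exactly as in the proof of Lemma~\ref{divisibleCurves}, I may embed $A$ by $\abs{L}$ and take a generic linear projection to obtain a finite, birational morphism $\varphi \colon A \to \widetilde{A} \subset \P^{N+1} = \P^{n+f}$ onto a hypersurface $\widetilde{A}$ of degree $L^N = (n+f-1)!\, q_i$. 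Cutting $\widetilde{A}$ by general hypersurfaces $H_j$ of degree $(n+f-1)!\, q_j$ for $j \ne i$ yields a complete intersection
\[ X_0 \colonequals \widetilde{A} \cap \bigcap_{j \ne i} H_j \subset \P^{n+f} \]
of dimension $n$ and of exactly the type $((n+f-1)!\, q_1, \ldots, (n+f-1)!\, q_f)$, hence lying in the closure of the family of smooth complete intersections of this type.

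Next I would establish the divisibility for curves on $X_0$. Let $C_0 \subset X_0$ be any curve; then $C_0 \subset \widetilde{A}$ and $\varphi^{\ast}\O_{\P^{n+f}}(1) = L$, so the projection formula gives
\[ \varphi^{-1}(C_0) \cdot L = \deg\!\big( \varphi^{-1}(C_0) \to C_0 \big) \cdot \deg_{\P^{n+f}} C_0, \]
where, just as in Lemma~\ref{divisibleCurves}, the branch number $s \colonequals \deg(\varphi^{-1}(C_0) \to C_0)$ is a positive integer with $s \le N$. The left-hand side is the intersection of $L$ with an effective $1$-cycle on $A$, so by Example~\ref{ex:AV} it is divisible by $N q_i$. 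Combining $N q_i \mid s \cdot \deg C_0$ with $s \mid N!$ gives $N q_i \mid N!\, \deg C_0$, and cancelling the common factor $N = n+f-1$ yields $q_i \mid (n+f-2)!\, \deg_{\P^{n+f}} C_0$.

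Finally I would transfer this from $X_0$ to the very general $X$. Following the degeneration underlying Lemma~\ref{divisibleCurves}, choose a one-parameter family of complete intersections of the given type whose total space is irreducible, whose general fiber is a very general $X$, and whose special fiber is $X_0$. Given any curve $C \subset X$, properness of the relative Chow/Hilbert scheme lets me specialize $C$ to an effective $1$-cycle $C_0$ on $X_0$ of the same degree, and applying the previous paragraph additively to the components of $C_0$ gives $q_i \mid (n+f-2)!\, \deg C_0 = (n+f-2)!\, \deg C$. Running this for each $i$ and combining by pairwise coprimality of the $q_i$ produces the claimed $q_1 \cdots q_f \mid (n+f-2)! \cdot \deg_{\P^{n+f}} C$. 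I expect the main obstacle to be this specialization step: one must arrange the total space of the degenerating family to be irreducible so that curves on the very general member genuinely limit onto $X_0$ with constant degree. This is exactly the Kollár-type input generalized from \cite{Kollar90}; the very ampleness of $L$ (where the hypothesis $q_i > 2^{\,n+f-1}$ enters) is by comparison a clean ingredient.
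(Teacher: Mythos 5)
Your proposal is correct and takes essentially the same route as the paper: fix $i$, invoke Debarre--Hulek--Spandaw very ampleness of the type-$(1,\ldots,1,q_{i})$ polarization (this is exactly where $q_{i} > 2^{n+f-1}$ enters), use Example~\ref{ex:AV} for the divisibility $(n+f-1)q_{i} \mid B \cdot L$, project generically and bound the fiber degree by $n+f-1$ to get $q_{i} \mid (n+f-2)! \cdot \deg C$, and assemble the $f$ statements by pairwise coprimality. The only organizational difference is that you rerun the Koll\'{a}r-type specialization at the level of the whole complete intersection $X \rightsquigarrow X_{0} = \widetilde{A} \cap \bigcap_{j \neq i} H_{j}$, whereas the paper applies Lemma~\ref{divisibleCurves} as a black box to the $i$-th defining hypersurface (which is itself very general, and on which any curve of $X$ already lies) --- the two packagings are interchangeable, since your specialization step is precisely the one carried out inside that lemma.
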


\begin{proof}

Fix $i$ and let $q_{i} > 2^{n+f-1}$ be an integer as above. If $(A, L)$ is a very general abelian variety of dimension $n+f-1$ with a polarization of type $(1, \ldots, 1, q_{i})$, then Debarre-Hulek-Spandaw \cite{DHS94} have shown that $L$ is very ample. Example~\ref{ex:AV} demonstrates that $(n+f-1) \cdot q_{i} \mid C \cdot L$ for every curve $C \subset A$. By applying the pair $(A, L)$ to Lemma~\ref{divisibleCurves} and using the fact that $L^{n+f-1} = (n+f-1)! \cdot q_{i}$, we see that if $X_{(n+f-1)! \cdot q_{i}} \subset \P^{n+f}$ is a generic hypersurface of degree $(n+f-1)! \cdot q_{i}$ and $C \subset X_{(n+f-1)! \cdot q_{i}}$ is any curve, then
\[ (n+f-1)q_{i} \mid (n+f-1)! \cdot \deg C \implies q_{i} \mid (n+f-2)! \cdot \deg C. \]
Now we can vary this argument for all $i = 1, \ldots, f$; in other words, we will apply it to the $f$ hypersurfaces that intersect to give $X$. From the hypothesis that $(q_{i}, q_{j}) = 1$ for all $1 \leq i < j \leq f$, it follows that for any curve $C \subset X$,
\[ q_{1} \cdots q_{f} \mid (n+f-2)! \cdot \deg C. \qedhere \]
\end{proof}

Proposition~\ref{degreeCurves} suggests that there should be uniform lower bounds for the degrees of curves in very general complete intersections of \text{any} type $(d_{1}, \ldots, d_{k})$, i.e. the following should hold:

\begin{conj}
Given a very general complete intersection variety $X \subset \P^{n+f}$ of dimension $n$ and type $(d_{1}, d_{2}, \ldots, d_{f})$, any curve in $X$ has degree $\geq D \cdot d_{1} \cdots d_{f}$, where $D$ is some positive constant depending only on $n$ and $f$.
\end{conj}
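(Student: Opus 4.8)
The plan is to use the divisibility engine of Proposition~\ref{degreeCurves} as the source of the lower bound and to propagate it from complete intersections of the special type $((n+f-1)!\,q_1, \ldots, (n+f-1)!\,q_f)$ to an arbitrary type by degeneration. First I would dispose of small degrees: there are only finitely many types $(d_1,\ldots,d_f)$ with all $d_i$ below any fixed threshold, and for each such type the minimal degree of a curve on a very general member is a fixed positive integer; since $d_1 \cdots d_f$ is then bounded, these finitely many cases can be absorbed by shrinking the constant $D$. Thus it suffices to treat the range where every $d_i$ is large.

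In that range the starting observation is that Proposition~\ref{degreeCurves} already settles the conjecture for special types: if $a_i = (n+f-1)!\,q_i$ with the $q_i$ pairwise coprime and $> 2^{n+f-1}$, then every curve $C$ on a very general complete intersection of type $(a_1,\ldots,a_f)$ satisfies $q_1 \cdots q_f \mid (n+f-2)! \deg C$, and since $\deg C \ge 1$ this forces
\[ \deg C \;\ge\; \frac{q_1 \cdots q_f}{(n+f-2)!} \;=\; \frac{a_1 \cdots a_f}{((n+f-1)!)^f\,(n+f-2)!}. \]
Given an arbitrary large type $(d_1,\ldots,d_f)$, I would then choose, exactly as in the prime-selection step behind Theorem~\ref{thm:cgCIsurfaces}, integers $a_i = (n+f-1)!\,q_i$ with the $q_i$ distinct primes exceeding $2^{n+f-1}$ and $d_i/2 < a_i \le d_i$; existence follows from Bertrand-type prime-gap estimates once the $d_i$ are large. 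The aim is to compare curves on a very general type-$(d_1,\ldots,d_f)$ complete intersection with curves on the special type $(a_1,\ldots,a_f)$, for which the displayed bound gives a lower bound $\ge 2^{-f} d_1\cdots d_f / (((n+f-1)!)^f (n+f-2)!)$.

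To make the comparison I would set up a one-parameter flat degeneration whose general fiber is the very general type-$(d_1,\ldots,d_f)$ complete intersection $X' = V(F_1,\ldots,F_f)$ and in which each $F_i$ degenerates to a product $G_i H_i$ with $\deg G_i = a_i$; the special fiber $V(G_1 H_1, \ldots, G_f H_f)$ then contains the special-type complete intersection $X = V(G_1,\ldots,G_f)$ as a component. Arguing by contradiction, and using the usual countability-of-Hilbert-components argument to spread a hypothetical low-degree curve into a family over the base (in the spirit of Proposition~\ref{propCgfamilies}), a curve $C'$ of degree $k < D\,d_1\cdots d_f$ on the general fiber would specialize to a $1$-cycle $C_0$ of the same degree $k$ in the special fiber. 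Since degree is additive over the components of $C_0$, it would suffice to know that $C_0$ has at least one component lying on the special component $X$: that component would have degree $\ge 2^{-f} d_1\cdots d_f/(((n+f-1)!)^f(n+f-2)!)$, and choosing $D$ below this value yields a contradiction.

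The main obstacle is precisely this last point — controlling where the limiting $1$-cycle goes. A component of $C_0$ lies on $X = \bigcap_i V(G_i)$ only if it lies in $V(G_i)$ rather than $V(H_i)$ for \emph{every} $i$ simultaneously, and a naive product degeneration gives no reason to exclude a limit that escapes into the auxiliary components $V(H_i)$, which may themselves carry low-degree curves. I see two routes past this. The first is an inductive degeneration on $f$, peeling off one factor $F_i \rightsquigarrow G_i H_i$ at a time and using a genericity argument at each stage to force the relevant containment, so that the escape locus has progressively smaller dimension; this reduces the problem to the hypersurface case $f=1$ but does not by itself settle it. The second, and to my mind more promising, route is to avoid reducible special fibers altogether and instead imitate Kollár's construction \cite{Kollar90} (which underlies Lemma~\ref{divisibleCurves} and Example~\ref{ex:AV}) by degenerating to an \emph{irreducible} special fiber on which every curve class is divisible — that is, to produce, for an essentially arbitrary degree, a smooth building block $Y$ of Picard number one whose curves have class divisible by a quantity comparable to $(\deg Y)^{1/\dim Y}$. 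The relation $L^{N} = N!\,\delta$ for abelian varieties is exactly what restricts the present argument to special types, so the crux of the conjecture is the construction of such divisibility building blocks for every degree; this is the step I expect to be genuinely hard, and it is likely where new input beyond the methods of this paper is required.
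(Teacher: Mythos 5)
First, a point of order: the statement you were asked to prove is not a theorem of the paper but an explicitly open conjecture --- the paper offers no proof, and indeed remarks only that a positive answer would streamline the proof of Theorem~\ref{thm:cgCIsurfaces} by eliminating the need for Proposition~\ref{propCgfamilies} and Remark~\ref{remark:primes}. So there is no argument in the paper to compare yours against; the question is whether your proposal settles the conjecture, and it does not --- as you candidly acknowledge. Your reduction of the problem is sensible (small degrees absorbed into the constant; Proposition~\ref{degreeCurves} giving the bound $\deg C \geq q_{1} \cdots q_{f}/(n+f-2)!$ for the special types; prime selection as in Remark~\ref{remark:primes} to get $d_{i}/2 < a_{i} \leq d_{i}$), but the two obstacles you flag at the end are genuine and, with the tools of this paper, fatal.

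To make them precise: (a) a flat limit of a single low-degree curve on the general fiber is merely a $1$-cycle on the reducible special fiber $V(G_{1}H_{1}, \ldots, G_{f}H_{f})$, and nothing forces any component of that cycle onto the distinguished component $X = V(G_{1}, \ldots, G_{f})$; it can lie entirely inside components involving some $V(H_{i})$, which are complete intersections of uncontrolled type and may well carry curves of small degree. This is exactly why the paper runs its degeneration at the level of \emph{covering gonality}: in Proposition~\ref{propCgfamilies} the covering family, having irreducible base and sitting in an irreducible total space, automatically dominates \emph{every} component of the central fiber, a constraint an individual curve does not satisfy. Your inductive ``peel off one equation'' variant does not repair this, since the escape can already occur at the first step and no genericity is available --- the hypothetical curve is fixed by the adversary. (b) The divisibility engine of Lemma~\ref{divisibleCurves} requires a smooth $Y$ with $L^{N} = d$ and uniform divisibility of $C \cdot L$, and the only known building blocks are the abelian varieties of Example~\ref{ex:AV}, for which $L^{N} = N! \cdot \delta$; this forces $d$ to be a multiple of $N!$ and is precisely why Proposition~\ref{degreeCurves} is confined to types $(n+f-1)! \cdot q_{i}$. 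Producing analogous building blocks in essentially arbitrary degree (or a specialization technique that forces limits onto the distinguished component) is the missing ingredient, and it requires input beyond this paper --- compare the subsequent progress on the hypersurface case in \cite{Paulsen21}. In short: your diagnosis of where the difficulty lives is accurate, but the proposal is a program with an open core step, not a proof, and the conjecture remains open.
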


\noindent Note that a positive answer to the question above would streamline the proof of Theorem~\ref{thm:cgCIsurfaces} by eliminating the need for Proposition~\ref{propCgfamilies} and Remark~\ref{remark:primes}.

%
%

\section{Proof of Theorem~\ref{thm:codim2nef}}

Recalling Set-up~\ref{setup:codim2nef}, we will induct on the number of points $r$. For the base case, the statement is trivial for $r \leq 2$ as soon as $b \geq 2$ since $\O_{Y}(1)$ is very ample. By induction, we may assume that the theorem holds for $r = s$ where
\[ 2 \leq s \leq \frac{2}{3(n+1)^{2}} ab - 1. \]
We want to prove that the theorem holds for $r = s+1$.

Suppose for the sake of contradiction that the theorem fails when $r = s+1$. Then $\exists p_{1}, \ldots, p_{s+1} \in Y$ such that the corresponding divisor
\[ L := bH - \sum_{i=1}^{s+1} (n+1) E_{i} \]
on the blow-up $\widetilde{Y} \xrightarrow{\mu} Y$ is not nef. Here, $H = \mu^{\ast}\O_{Y}(1)$ and $E_{i}$ is the exceptional divisor over $p_{i}$.

By definition, this means that there is an integral curve $\widetilde{C} \subset \widetilde{Y}$ such that
\[ L \cdot \widetilde{C} < 0. \]
We claim that $\widetilde{C}$ cannot be contained in some exceptional divisor $E_{j}$, because otherwise $\widetilde{C} \cdot E_{j} < 0$ would imply $\widetilde{C} \cdot L > 0$, which is a contradiction. By the Lefschetz hyperplane theorem and Poincar\'{e} duality, $\widetilde{C}$ is numerically a $\Q$-linear combination of terms involving $H^{n}$ and $E_{i}^{n}$ (note that the mixed terms involving $H \cdot E_{i}$ must vanish because we have blown up a collection of points). One can check that $H^{n+1} = a$ and $(-E_{i})^{n+1} = -1$. Furthermore, the intersection numbers $\widetilde{C} \cdot H \geq 1$ and $\widetilde{C} \cdot E_{i} \geq 0$ must be integers. It follows that the numerical class of $\widetilde{C}$ is given by
\begin{equation}\label{eq:numClassCcodim2}
\widetilde{C} \equiv_{\text{num}} \frac{k}{a} H^{n} + (-1)^{n} \sum_{i=1}^{s+1} m_{i} E_{i}^{n}
\end{equation}
where $k \geq 1$ is the degree of the image curve $C \colonequals \mu(\widetilde{C}) \subset \P^{n+2}$ and $m_{i} \geq 0$ are the multiplicities of $C$ at $p_{i}$. Note that
\[ L \cdot \widetilde{C} < 0 \implies \sum_{i=1}^{s+1} m_{i} > \frac{1}{n+1} bk. \]
Since the quantity $\sum m_{i}^{2}$ is minimized when all of the $m_{i}$ are the same, it follows that
\begin{equation}\label{eq:lowerBound}
\sum_{i=1}^{s+1} m_{i} > \frac{1}{n+1}kb =: \gamma \implies \sum_{i=1}^{s+1} m_{i}^{2} > \left( \frac{\gamma}{s+1} \right)^{2} \cdot (s+1) = \frac{k^{2} b^{2}}{(n+1)^{2} (s+1)}.
\end{equation}

On the other hand, our induction hypothesis implies that the divisor $L_{I} := bH_{Y} - \sum_{i \in I} (n+1)E_{i}$ is nef for any subset $I \subset \{ 1, 2, \ldots, s+1 \}$ with $\# I = s$. Averaging over all $I$ shows that
\[ L_{s+1} := \frac{s+1}{s} bH_{Y} - \sum_{i=1}^{s+1} (n+1)E_{i} \]
is nef. This implies that $L_{s+1} \cdot \widetilde{C} \geq 0$, and hence
\begin{equation}\label{sumMult2}
\sum_{i=1}^{s+1} m_{i} \leq \frac{(s+1)}{s (n+1)} bk.
\end{equation}

By Lemma~\ref{eqMult} and Proposition~\ref{genusHyp} applied to $C \subset Y \subset \P^{n+2}$, we have
\begin{align}\label{boundPlane+Voisin}
\sum_{i=1}^{r} \frac{m_{i}(m_{i}-1)}{2} &\leq \frac{1}{2} (k-1)(k-2) - p_{g}(C) \nonumber \\
&\leq \frac{1}{2} (k-1)(k-2) - \frac{1}{2} (a-2n-3) k - 1 \nonumber \\
&= \frac{1}{2} (k^{2} +(2n-a)k)
\end{align}
Next, we can combine this with the inequalities in \eqref{eq:lowerBound} and \eqref{sumMult2}:
\[ \frac{k^{2} b^{2}}{(n+1)^{2} (s+1)} < \sum_{i=1}^{r} m_{i}^{2} = \sum_{i=1}^{r} m_{i}(m_{i}-1) + \sum_{i=1}^{r} m_{i} \leq k^{2} +(2n-a)k + \frac{(s+1)}{s(n+1)} bk. \]
After simplifying, we get
\begin{equation}\label{eq:solvingk}
\left[ \frac{b^{2}}{(n+1)^{2}\cdot (s+1)} - 1 \right] \cdot k < 2n - a + \frac{(s+1)}{s (n+1)}b.
\end{equation}
Recall from our induction set-up that
\[ 3 \leq s+1 \leq \frac{2}{3(n+1)^{2}} ab \quad \text{and} \quad n \geq 2 \implies \frac{(s+1)}{s (n+1)} \leq \frac{1}{3}. \]
In addition, $b \geq a$ so term on the left hand side of \eqref{eq:solvingk} is positive. Solving for $k$ yields:
\begin{equation}\label{UpperBound-k}
k < \frac{(2n-a+b/3)\cdot 2a}{3b-2a} \leq \frac{1}{9} \left( 2a + \frac{(18n-7a)\cdot 2a}{3b-2a} \right) \leq \frac{2}{9} a.
\end{equation}
Since all $m_{i}$ are nonnegative integers (in fact, the induction hypothesis tells us that $m_{i} \geq 1$ for all $i$), the inequality in \eqref{boundPlane+Voisin} also gives
\[ \frac{1}{2} (k-1)(k-2) - \frac{1}{2}k (a-2n-3) - 1 \geq \sum_{i=1}^{r} \frac{m_{i}(m_{i}-1)}{2} \geq 0 \implies k \geq a-2n, \]
which contradicts \eqref{UpperBound-k} as soon as $a \geq 18n/7$. This completes the proof of Theorem~\ref{thm:codim2nef}.

%
%

\section{Proof of Theorem~\ref{thm:dim2nef}}

Recalling Set-up~\ref{setup:ciSurfacenef}, we will induct on the number of points $r$. Let $\alpha \colonequals a_{1} \cdots a_{e-1}$ be the degree of the threefold $Y \subset \P^{e+2}$. For the base case $r \leq 2$, the statement is trivial as soon as $d\geq 2$ since $H_{Y}$ is very ample. By induction, we may assume that the theorem holds for $r = s$ where
\[ 2 \leq s \leq \frac{2}{3^{4} (3e+2) ((e+1)!)^{e}} \cdot \alpha a_{e} -1. \]
We want to prove that the theorem holds for $r = s+1$.

Suppose for the sake of contradiction that the theorem fails when $r = s+1$. Then $\exists p_{1}, \ldots, p_{s+1} \in Y$ such that the corresponding divisor
\[ L := a_{e}H - \sum_{i=1}^{s+1} 3 E_{i} \]
on the blow-up $\widetilde{Y}$ is not nef.

By definition, this means that there is an integral curve $\widetilde{C} \subset \widetilde{Y}$ such that
\[ L \cdot \widetilde{C} < 0. \]
As before, note that $\widetilde{C}$ cannot be contained in some exceptional divisor $E_{j}$. By the Lefschetz hyperplane theorem and Poincar\'{e} duality, $\widetilde{C}$ is numerically a $\Q$-linear combination of terms involving $H^{2}$ and $E_{i}^{2}$ (note that the mixed terms involving $H \cdot E_{i}$ must vanish because we have blown up a collection of points). One can check that $H^{3} = \alpha$ and $(-E_{i})^{3} = -1$. Furthermore, the intersection numbers $\widetilde{C} \cdot H \geq 1$ and $\widetilde{C} \cdot E_{i} \geq 0$ must be integers. It follows that the numerical class of $\widetilde{C}$ is given by
\begin{equation}\label{eq:numClassCsurface}
\widetilde{C} \equiv_{\text{num}} \frac{k}{\alpha} H^{2} + \sum_{i=1}^{s+1} m_{i} E_{i}^{2}
\end{equation}
for some integers $k \geq 1$ and $m_{i} \geq 0$.

\begin{remark}\label{remark:kAndm_i}
Note that $k$ is the degree of the image curve $C \colonequals \mu(\widetilde{C}) \subset Y \subset \P^{e+2}$ and the integers $m_{i}$ are the multiplicities of $C$ at $p_{i}$. By the induction hypothesis, we know that $m_{i} \geq 1$ for all $i$.
\end{remark}

Using the description \eqref{eq:numClassCsurface} for $\widetilde{C}$, the condition $L \cdot \widetilde{C} < 0$ reduces to
\begin{equation}\label{equation:CintersectedL<0}
\sum_{i=1}^{s+1} m_{i} > \frac{1}{3} a_{e}k.
\end{equation}
For a fixed $\sum_{i=1}^{s+1} m_{i}$, the quantity $\sum m_{i}^{3/2}$ is minimized when all of the $m_{i}$ are the same, so setting $N \colonequals a_{e}k/3$ for the right hand side yields
\begin{equation}\label{lowerBoundm3/2}
\sum_{i=1}^{s+1} m_{i}^{3/2} > \left( \frac{N}{s+1} \right)^{3/2} \cdot (s+1) = \frac{1}{3^{3/2} \cdot (s+1)^{1/2}} k^{3/2} a_{e}^{3/2}.
\end{equation}

The rest of the proof will be devoted to bounding the expression $\sum_{i=1}^{s+1} m_{i}^{3/2}$ from above. We will first use a dimension count to approximate $C$ as a complete intersection curve. The key point is that if we can find an effective divisor $V \in \abs{\O_{Y}(\ell)}$ of degree $\ell \leq \frac{1}{3} a_{e}$ passing through all of the points $p_{i}$, then
\[ L \cdot \widetilde{C} < 0 \implies \widetilde{V} \cdot \widetilde{C} < 0, \]
where $\widetilde{V}$ is the strict transform of $V$. It must then follow that $\widetilde{C} \subset \widetilde{V}$. This is the basic idea behind:

\begin{lemma}\label{lemma: IntersectV1V2}
With Set-up~\ref{setup:ciSurfacenef} in mind and integers $a_{e} \geq a_{e-1} \geq \cdots \geq a_{1}$, there are surfaces $V_{j} \in \abs{\O_{Y}(b_{j})}$ with $b_{j} \leq \frac{1}{3}a_{e}$ for $j = 1,2$ such that their intersection $V_{1} \cap V_{2}$ is a curve (not necessarily irreducible) which contains $C$ as an irreducible component.
\end{lemma}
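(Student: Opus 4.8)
The plan is to realize $C$ inside the intersection of two surfaces on $Y$ of degree $\le \tfrac13 a_e$, both passing through the points $p_1,\dots,p_{s+1}$, the key point being that passing through the points already forces such a surface to contain $C$. Concretely, fix $\ell \colonequals \lfloor \tfrac13 a_e\rfloor$ and suppose $V\in\abs{\O_Y(\ell)}$ passes through every $p_i$, so its strict transform is $\widetilde V\equiv \ell H-\sum_i t_iE_i$ with $t_i=\mult_{p_i}V\ge 1$. Pairing with the class \eqref{eq:numClassCsurface} and using $H^3=\alpha$, $E_i^3=1$, and $H\cdot E_i=0$ gives $\widetilde V\cdot\widetilde C=\ell k-\sum_i t_im_i\le \ell k-\sum_i m_i$, which by \eqref{equation:CintersectedL<0} and $\ell\le\tfrac13 a_e$ is $<\tfrac13 a_e k-\tfrac13 a_e k=0$. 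Since $\widetilde C$ is integral, $\widetilde V\cdot\widetilde C<0$ forces $\widetilde C\subseteq\widetilde V$, i.e. $C\subseteq V$. Thus it suffices to produce two surfaces of degree $\le\tfrac13 a_e$ through the points whose intersection is one-dimensional.

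To guarantee enough such surfaces I would run a dimension count. Let $W\subseteq H^0(Y,\O_Y(\ell))$ be the subspace of sections vanishing at the $s+1$ points; since each point imposes at most one linear condition, $\dim_\C W\ge h^0(Y,\O_Y(\ell))-(s+1)$. Applying Lemma~\ref{lem:globalsectionsKoszul} to the threefold $Y$ of type $(a_1,\dots,a_{e-1})$ expresses $h^0(Y,\O_Y(\ell))$ as a sum of binomials $\binom{\ell-\sum_i j_i+3}{3}$, and retaining the terms with $\sum_i j_i$ small produces a lower bound whose leading term is $\tfrac{\alpha \ell^3}{6}$ with $\alpha=a_1\cdots a_{e-1}$. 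The bound $s+1\le \tfrac{2}{3^4(3e+2)((e+1)!)^e}\alpha a_e$ in the statement is calibrated so that this lower bound comfortably exceeds $s+3$, the hypotheses $a_i\ge 3e$ and $a_i=(e+1)!\,q_i$ being what let me control the error terms in the binomial sum. The conclusion is $\dim_\C W\ge 2$, so $W$ contains at least a pencil of surfaces through the points, every member of which contains $C$ by the previous paragraph.

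It remains to select two members meeting along a curve. Since $\Pic Y=\Z\cdot\O_Y(1)$ by Grothendieck--Lefschetz, I may write a general member of $W$ as $F+M$ with $F$ the divisorial fixed part and $M$ the moving part; two general members then satisfy $V_1\cap V_2=F\cup(M_1\cap M_2)$ with $M_1\cap M_2$ a curve. If $F=0$, or if $F\neq 0$ but $C\not\subseteq F$ (so $C$ lies on the moving part of every member), then two general moving divisors $M_1,M_2$ — still of degree $\le\tfrac13 a_e$ — already meet in a curve containing $C$, and we are done. The genuine obstacle is the case where $W$ has a nonzero fixed surface $F$ with $C\subseteq F$: then any two members of $W$ share the two-dimensional locus $F$, so I must leave $W$ to find the second surface. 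Here I would show that the irreducible component $S\subseteq F$ containing $C$ is \emph{not} a fixed component of the full system $\abs{\O_Y(\ell)\otimes\I_C}$ of degree-$\ell$ surfaces through $C$ — that is, some degree-$\ell$ surface contains $C$ but not $S$ — by a dimension count comparing $h^0(\O_Y(\ell)\otimes\I_C)$ with $h^0(\O_Y(\ell)\otimes\I_S)$, and take that surface as $V_2$. Controlling this last step uniformly — in particular bounding the surfaces through $C$ when $\deg C=k$ is large — is where I expect the argument to demand the most care.
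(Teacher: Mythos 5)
Your first two paragraphs are correct and match the paper's own argument: the intersection computation showing that any surface of degree $\le a_{e}/3$ passing through the $p_{i}$ is forced to contain $C$ is exactly the paper's mechanism, as is the Koszul-type dimension count via Lemma~\ref{lem:globalsectionsKoszul}. The genuine gap is in your third paragraph, and you flag it yourself: in the case where the system $W$ of degree-$\ell$ surfaces through the points has a fixed surface component $S$ containing $C$, you only announce a plan (``I would show\ldots by a dimension count comparing $h^{0}(\O_{Y}(\ell)\otimes\I_{C})$ with $h^{0}(\O_{Y}(\ell)\otimes\I_{S})$'') without carrying it out, so the proposal is incomplete precisely at the one step that needs a new idea. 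Moreover, the route you sketch is misdirected: you never need to control surfaces through $C$, which is fortunate, because at this stage of the induction there is no upper bound yet on $k=\deg C$ (that only arrives later, via Proposition~\ref{degreeCurves}). Since $\Pic Y=\Z\cdot\O_{Y}(1)$, the irreducible surface $S$ is cut out by a section of some $\O_{Y}(m)$ with $m\ge 1$, so the degree-$\ell$ surfaces containing $S$ form a space of dimension exactly $h^{0}(\O_{Y}(\ell-m))\le h^{0}(\O_{Y}(\ell-1))$; hence it suffices to check $h^{0}(\O_{Y}(\ell))-h^{0}(\O_{Y}(\ell-1))>s+1$ --- a count of the same Koszul type --- to produce a surface through the points (hence through $C$, by your first paragraph) not containing $S$. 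One can then take $V_{1}=S$ (which contains all the $p_{i}$ because $C$ does, by Remark~\ref{remark:kAndm_i}) and $V_{2}$ the new surface.

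The paper sidesteps the fixed-component case analysis entirely with a cleaner device worth noting. It first arranges $V_{1}$ to be irreducible: any surface of degree $\le\lfloor a_{e}/3\rfloor$ through the points contains $C$, and the irreducible component containing $C$ still passes through every $p_{i}$ (again because $m_{i}\ge 1$) and has no larger degree. Then, instead of searching inside $H^{0}(Y,\O(\ell)\otimes\I_{\{p_{1},\ldots,p_{s+1}\}})$ for a second member, it runs the second dimension count in the restricted system $H^{0}(V_{1},\O(\ell))$ on the surface $V_{1}$ itself, where any nonzero section through the points is automatically a divisor on $V_{1}$, i.e.\ already one-dimensional, and lifts it to $Y$ using the surjectivity of $H^{0}(Y,\O(b_{2}))\to H^{0}(V_{1},\O(b_{2}))$ for complete intersections \cite[Lemma VIII.9]{Beauville96}. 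The lifted $V_{2}$ cannot contain $V_{1}$ (its restriction to $V_{1}$ is a nonzero section), so $V_{1}\cap V_{2}$ is a curve, and $V_{2}$ contains $C$ by the same negativity argument. Your outline reduces to the same two counts, but until the fixed-component case is actually executed (for instance as above), the proposal does not yet constitute a proof.
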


\begin{proof}
Recall that
\[ s+1 \leq \frac{2}{3^{4} (3e+2) ((e+1)!)^{e}} \cdot \alpha a_{e}. \]
Choose $b_{1}$ to be the minimal degree such that there exists a hypersurface $V_{1} \in \abs{\O_{Y}(b_{1})}$ passing through all of the points $p_{i}$ ($1 \leq i \leq s+1$). We claim that $b_{1} \leq \lfloor a_{e}/3 \rfloor$. For $\ell = \lfloor a_{e}/3 \rfloor$, we can apply Lemma~\ref{lem:globalsectionsKoszul} with $f = e-1$, $n = 3$, and $Y \subset \P^{e+2}$ a threefold of type $(a_{1}, \ldots, a_{e-1})$ to see that
\begin{align*}
h^{0}(Y, \O(\ell)) &= \sum_{j_{1}=0}^{a_{1}} \cdots \sum_{j_{e-1}=0}^{a_{e-1}} h^{0}(\P^{3}, \O(\ell - j_{1} - \cdots - j_{e-1})) \\
&\geq \sum_{j_{1}=0}^{\lfloor a_{1}/(3e) \rfloor} \cdots \sum_{j_{e-1}=0}^{\lfloor a_{e-1}/(3e) \rfloor} h^{0}(\P^{3}, \O(\ell - j_{1} - \cdots - j_{e-1})) \qquad (\text{by truncating the sum}) \\
&> \sum_{j_{1}=0}^{\lfloor a_{1}/(3e) \rfloor} \cdots \sum_{j_{e-1}=0}^{\lfloor a_{e-1}/(3e) \rfloor} \frac{1}{3!} \left( \ell - \left\lfloor \frac{a_{1}}{3e} \right\rfloor - \cdots - \left\lfloor \frac{a_{e-1}}{3e} \right\rfloor + 1 \right)^{3} \\
&\geq \frac{1}{6} \cdot \frac{a_{1}}{3e} \cdots \frac{a_{e-1}}{3e} \cdot \left(\left\lfloor \frac{1}{3} a_{e} \right\rfloor - (e-1) \cdot \left\lfloor \frac{a_{e}}{3e} \right\rfloor + 1 \right)^{3} \qquad (\text{setting } \ell = \lfloor a_{e}/3 \rfloor) \\
&\geq \frac{1}{6 \cdot (3e)^{e+2}} \alpha a_{e}^{3} \\
&> s+1 \qquad (\text{for } e \geq 2).
\end{align*}
Hence, there exists a hypersurface $V_{1} \in \abs{\O_{Y}(\lfloor a_{e}/3 \rfloor)}$ of degree $b_{1} \leq \lfloor a_{e}/3 \rfloor$ which passes through all of the $p_{i}$. We claim that $C \subset V_{1}$. To see this, observe that the class of the strict transform $\widetilde{V}_{1} \subset \widetilde{Y}$ is given by
\[ \widetilde{V_{1}} \equiv_{\text{lin}} b_{1} H - \sum_{i=1}^{s+1} c_{i} E_{i} \]
for some integers $c_{i} \geq 1$ with $1 \leq i \leq s+1$. By comparing with \eqref{equation:CintersectedL<0}, the condition $b_{1} \leq a_{e}/3$ implies that $\widetilde{V_{1}} \cdot \widetilde{C} < 0$ and hence $\widetilde{C} \subset \widetilde{V_{1}}$. Therefore, $C \subset V_{1}$. Since $C$ is irreducible and passes through all of the $p_{i}$ by Remark~\ref{remark:kAndm_i}, we may assume that $V_{1}$ is irreducible.

By a similar argument, choose $b_{2}$ to be the smallest positive integer such that there exists a section $V_{2} \in \abs{\O_{V_{1}}(b_{2})}$ containing $p_{i}$ ($1 \leq i \leq s+1$). Since $V_{1}$ is a complete intersection, a similar dimension count using Lemma~\ref{lem:globalsectionsKoszul} for $V_{1} \subset \P^{e+2}$ leads to:
\begin{align*}
h^{0}(V_{1}, \O(\ell)) &= \sum_{j_{0}=0}^{b_{1}-1} \sum_{j_{1}=0}^{a_{1}-1} \cdots \sum_{j_{e-1}=0}^{a_{e-1}-1} h^{0}(\P^{2}, \O(\ell-j_{0}-j_{1}- \cdots - j_{e-1})) \\
&\geq \sum_{j_{1}=0}^{\lfloor a_{1}/(3e) \rfloor} \cdots \sum_{j_{e-1}=0}^{\lfloor a_{e-1}/(3e) \rfloor} h^{0}(\P^{2}, \O(\ell-j_{1}- \cdots - j_{e-1})) \quad \let\scriptstyle\textstyle \substack{ (\text{taking the first term } \\ j_{0}=0 \text{ and truncating}) } \\
&> \sum_{j_{1}=0}^{\lfloor a_{1}/(3e) \rfloor} \cdots \sum_{j_{e-1}=0}^{\lfloor a_{e-1}/(3e) \rfloor} \frac{1}{2} \left(\ell - \left\lfloor \frac{a_{1}}{3e} \right\rfloor - \cdots - \left\lfloor \frac{a_{e-1}}{3e} \right\rfloor + 1 \right)^{2} \quad (\text{since } \ell = \lfloor a_{e}/3 \rfloor ) \\
&\geq \frac{1}{2} \cdot \frac{a_{1}}{3e} \cdots \frac{a_{e-1}}{3e} \cdot \left( \lfloor \frac{1}{3} a_{e} \rfloor - (e-1) \cdot \lfloor \frac{a_{e}}{3e} \rfloor + 1 \right)^{2} \\
&\geq \frac{1}{2 \cdot (3e)^{e+1}} \alpha a_{e}^{2} \\
&> s+1.
\end{align*}
Thus, there exists a hypersurface $V_{2} \in \abs{\O_{V_{1}}(b_{2})}$ of degree $b_{2} \leq \lfloor a_{e}/3 \rfloor$ passing through all of the $p_{i}$. The restriction map $H^{0}(Y, \O(b_{2})) \rightarrow H^{0}(V_{1}, \O(b_{2}))$ is surjective (see \cite[Lemma VIII.9]{Beauville96}), so without loss of generality we may take $V_{2}$ to be a surface in $\abs{\O_{Y}(b_{2})}$ such that the intersection $V_{1} \cap V_{2}$ is a curve (not necessarily irreducible). By the same reasoning as above, the condition $b_{2} \leq a_{e}/3$ implies that $V_{2}$ contains $C$. This proves that $C \subset V_{1} \cap V_{2}$.
\end{proof}

We can now give a bound for the arithmetic genus of $C \subset Y$.

\begin{lemma}
With the same assumptions as in the previous lemma, the arithmetic genus of $C$ is bounded from above by
\[ p_{a}(C) \leq (a_{1} + \cdots + a_{e-1} + 2a_{e}/3) \cdot \deg C \]
\end{lemma}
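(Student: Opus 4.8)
The plan is to realize $C$ as a divisor on the complete intersection surface $V_1$ and to apply adjunction there. By Lemma~\ref{lemma: IntersectV1V2}, $C$ is an irreducible component of $V_1 \cap V_2$, where $V_j \in \abs{\O_Y(b_j)}$ and $b_j \leq \lfloor a_e/3 \rfloor$. Since $Y \subset \P^{e+2}$ is a threefold complete intersection of type $(a_1, \ldots, a_{e-1})$, the surface $S \colonequals V_1$ is itself a complete intersection of type $(a_1, \ldots, a_{e-1}, b_1)$ in $\P^{e+2}$. By adjunction for complete intersections its canonical class is $K_S = (a_1 + \cdots + a_{e-1} + b_1 - (e+3))\,H_S$, where $H_S = \O_S(1)$, so that $K_S \cdot C = (a_1 + \cdots + a_{e-1} + b_1 - (e+3))\deg C$.

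Assuming for the moment that the adjunction formula is available on $S$, I would argue as follows. The curve $C$ is an irreducible component of the effective Cartier divisor $V_2|_S \in \abs{\O_S(b_2)}$; writing $V_2|_S = \mu C + R_0$ with $\mu \geq 1$ and $R_0$ effective not containing $C$, the identity $b_2 \deg C = V_2|_S \cdot C = \mu\,C^2 + R_0 \cdot C$ together with $R_0 \cdot C \geq 0$ forces $C^2 \leq b_2 \deg C$. Combining this with the value of $K_S \cdot C$ above, the adjunction formula $2 p_a(C) - 2 = C^2 + K_S \cdot C$ yields
\[ p_a(C) \leq 1 + \tfrac{1}{2}\left(a_1 + \cdots + a_{e-1} + b_1 + b_2 - (e+3)\right)\deg C. \]
Bounding $b_1 + b_2 \leq 2\lfloor a_e/3 \rfloor \leq 2a_e/3$, discarding the negative term $-(e+3)$, and absorbing the constant $1$ into the linear term (legitimate since $\deg C \geq 1$ and $a_1 + \cdots + a_{e-1} + 2a_e/3 \geq 2$) gives exactly the claimed bound $p_a(C) \leq (a_1 + \cdots + a_{e-1} + 2a_e/3)\deg C$.

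The main obstacle is that $V_1$, having been constrained to pass through the $p_i$ and to contain $C$, need not be smooth, so the adjunction identity used above must be justified. Since $V_1$ is a complete intersection it is at least Gorenstein, with invertible dualizing sheaf $\omega_{V_1} = \O_{V_1}(a_1 + \cdots + a_{e-1} + b_1 - (e+3))$, and the formula $2p_a(C)-2 = C^2 + K_S \cdot C$ then holds verbatim provided $C$ is a Cartier divisor on $V_1$ — in particular whenever $V_1$ is smooth along $C$. I would therefore not insist on the specific surface produced by Lemma~\ref{lemma: IntersectV1V2}, but instead take a general $S \in \abs{\O_Y(b_1)}$ containing $C$, which exists because the minimal degree of a surface through $C$ is $\leq b_1 \leq \lfloor a_e/3 \rfloor$; by Bertini such an $S$ is smooth off its base locus, and the delicate step is to ensure it is smooth along $C$ as well (equivalently, that $C$ is Cartier on $S$). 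Verifying this — either by exhibiting enough positivity of $\I_C(b_1)$ along $C$ for $a_e \gg 0$, or by a direct dualizing-sheaf computation on the Gorenstein surface checking that the contributions of its singularities have the favorable sign — is the one point I expect to require real care.
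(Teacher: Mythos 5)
Your route is genuinely different from the paper's, and the step you yourself flag as ``the one point requiring real care'' is not a technicality but a genuine gap, and in general it cannot be fixed. The adjunction identity $2p_a(C)-2 = C^2 + K_S\cdot C$ needs $C$ to be Cartier on $S$, and your proposed repair via Bertini does not deliver this: a general member of $\abs{\I_{C}(b_1)}$ is smooth away from the base locus of that linear system, but the base locus \emph{contains} $C$, which is exactly where you need smoothness. Worse, nothing in the setup rules out that $C$ fails to be a local complete intersection at some point --- $C$ is an arbitrary irreducible curve with $L\cdot\widetilde{C}<0$, forced to have large multiplicities $m_i$ at the $p_i$ --- and at a non-lci point of $C$ \emph{no} surface containing $C$ is smooth, since a curve on a smooth surface is locally principal. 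So ``choose $S$ smooth along $C$'' is not merely delicate but impossible in general. Retreating to the Gorenstein surface $V_1$ does not help either: $V_1$ need not be normal, so $C^2$ and the decomposition $V_2|_{V_1} = \mu C + R_0$ as intersectable divisor classes are not defined, and on a normal Gorenstein surface the adjunction-type formula for a non-Cartier Weil divisor acquires correction terms whose sign you would have to control at every singular point of $V_1$ along $C$ --- precisely the analysis you defer. There is also indirect evidence in the paper that this obstruction is real: the author bounds $p_a(C)-p_g(C)$ via Proposition~\ref{prop:multiplicityDeltaInvariant} with $n=3$, getting only the exponent $m^{3/2}$ rather than the plane-curve bound $\sim m^2/2$; if $C$ could be placed on a smooth surface as you propose, that weaker estimate (and much of \S 4) would be unnecessary.

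For comparison, the paper avoids all smoothness and Cartier issues by a Castelnuovo-style Hilbert-function argument: it bounds the growth $\alpha_\ell - \alpha_{\ell-1}$ of the image of $H^0(Y,\O(\ell))\to H^0(C,\O(\ell))$ using a general hyperplane section $\Gamma = H\cap C$, and uses Lemma~\ref{lemma: IntersectV1V2} only through the containment $\Gamma \subset H\cap V_1\cap V_2$, which forces $H^1(Y_2,\I_\Gamma(\ell))=0$ for $\ell > a_1+\cdots+a_{e-1}-e-2+b_1+b_2$ and hence $\gamma_\ell = 0$ beyond that range; Riemann--Roch for $\ell\gg 0$ then gives the genus bound. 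This needs only that $C$ is reduced and irreducible. Your calculation, where it is valid, would give the stronger bound $p_a(C)\leq 1+\tfrac12\left(a_1+\cdots+a_{e-1}+b_1+b_2-(e+3)\right)\deg C$ (better by roughly a factor of $2$), and your estimate $C^2 \leq b_2\deg C$ via $b_2\deg C = \mu C^2 + R_0\cdot C$ is fine on a smooth surface; but as it stands the argument establishes the lemma only under an unproven (and generally false) hypothesis that $C$ lies Cartier on such a surface.
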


\begin{proof}

The approach used here follows from ideas of Castelnuovo \cite{Castelnuovo1893} about bounding the genus of space curves as well as subsequent work of Harris \cite{Harris80}. In summary, let $\alpha_{\ell}$ be the dimension of the image of the restriction map
\[ H^{0}(Y, \O(\ell)) \xrightarrow{\rho_{\ell}} H^{0}(C, \O(\ell)). \]
We would like to bound the differences $\alpha_{\ell} - \alpha_{\ell-1}$ from below in order to get a lower bound for $h^{0}(C, \O(\ell))$. For $\ell \gg 0$ we can then apply Riemann-Roch to obtain a bound on $p_{a}(C)$.

Let $H$ be a generic hyperplane section in $\P^{e+2}$ and consider the intersections
\[ \Gamma \colonequals H \cap C, \qquad Y_{2} \colonequals Y \cap H. \]
There is another restriction map
\[ H^{0}(Y, \I_{\Gamma}(\ell)) \xrightarrow{\sigma_{\ell}} H^{0}(C, \I_{\Gamma}(\ell) \big|_{C}) \cong H^{0}(C, \O(\ell-1)). \]
Since $H^{0}(Y, \O(\ell-1))$ injects into $H^{0}(Y, \I_{\Gamma}(\ell))$ via the multiplication map by the defining equation of $H$ and the kernels of $\rho_{\ell}$ and $\sigma_{\ell}$ are isomorphic, i.e. equal to $H^{0}(Y, \I_{C}(\ell))$, we see that
\[ \beta_{\ell} \colonequals h^{0}(Y, \O_{\ell}) - h^{0}(Y, \I_{\Gamma}(\ell)) \leq \alpha_{\ell} - \alpha_{\ell-1}. \]
The natural restriction maps $H^{0}(Y, \O(\ell)) \rightarrow H^{0}(Y_{2}, \O(\ell))$ and $H^{0}(Y, \I_{\Gamma}(\ell)) \rightarrow H^{0}(Y_{2}, \I_{\Gamma}(\ell))$ are both surjective, so
\[ \beta_{\ell} = h^{0}(Y_{2}, \O(\ell)) - h^{0}(Y_{2}, \I_{\Gamma}(\ell)). \]
In order to understand the $\beta_{\ell}$, set $\gamma_{0} \colonequals \beta_{0}$ and define the differences
\[ \gamma_{\ell} \colonequals \beta_{\ell} - \beta_{\ell-1}. \]
Furthermore, take a generic hyperplane section $Y_{1} \colonequals H' \cap Y_{2}$ (note that $\dim Y_{i} = i$). Assuming $H'$ is disjoint from $\Gamma$, we can write down a long sequence on cohomology:
\begin{equation}\label{LES:gammatoe}
0 \rightarrow H^{0}(Y_{2}, \I_{\Gamma}(\ell-1)) \rightarrow H^{0}(Y_{2}, \I_{\Gamma}(\ell)) \xrightarrow{(\ast)} H^{0}(Y_{1}, \O(\ell)) \rightarrow H^{1}(Y_{2}, \I_{\Gamma}(\ell-1)).
\end{equation}
If we let $e_{\ell}$ be the dimension of the image of $(\ast)$, then one can show that
\begin{equation}\label{equation:gammatoe}
\gamma_{\ell} = h^{0}(Y_{1}, \O(\ell)) - e_{\ell}.
\end{equation}

By Serre vanishing, $\gamma_{\ell} = 0$ for $\ell \gg 0$. For our purposes, we will need an effective bound for when the numbers $\gamma_{\ell}$ become zero. This is where Lemma~\ref{lemma: IntersectV1V2} will be utilized. Recall that $C \subset V_{1} \cap V_{2}$ where $V_{j} \in \abs{\O_{Y}(b_{j})}$ are surfaces in $Y$ and $V_{1} \cap V_{2}$ is a (possibly reducible) curve. So
\[ \Gamma \subset H \cap V_{1} \cap V_{2} \]
where the right hand side is also finite. From the Koszul resolution for $H \cap V_{1} \cap V_{2} \subset Y_{2}$, Serre duality, and \cite[Lemma VIII.9]{Beauville96}, it follows that
\[ H^{1}(Y_{2}, I_{H \cap V_{1} \cap V_{2}}(\ell)) = 0 \quad \text{for} \quad \ell > a_{1} + \cdots + a_{e-1} -e-2+b_{1}+b_{2}. \]
The exact sequence $0 \rightarrow I_{H \cap V_{1} \cap V_{2}} \rightarrow I_{\Gamma} \rightarrow \O_{H \cap V_{1} \cap V_{2} \setminus \Gamma} \rightarrow 0$ on $Y_{2}$ implies that
\[ H^{1}(Y_{2}, \I_{\Gamma}(\ell)) = 0 \quad \text{if} \quad \ell > a_{1} + \cdots + a_{e-1} - e - 2 + b_{1} + b_{2}. \]
Since $b_{1}, b_{2} \leq a_{e}/3$, by \eqref{LES:gammatoe} and \eqref{equation:gammatoe} we see that $\gamma_{\ell} = 0$ for
\[ \ell > a_{1} + \cdots + a_{e-1} + 2a_{e}/3 - e - 1. \]

Note that $\beta_{i} = \sum_{v=0}^{i} \gamma_{v}$ so
\[ \sum_{i=0}^{\ell} \beta_{i} = \sum_{i=0}^{\ell} \sum_{v=0}^{i} \gamma_{v} = \sum_{i=0}^{\ell} (\ell - i + 1) \gamma_{i}. \]
From the ideal sheaf sequence for $\Gamma \subset Y$ and the fact that $H^{1}(Y, \I_{\Gamma}(\ell)) = 0$ for $\ell \gg 0$, it follows that
\[ \sum_{i=0}^{\ell} \gamma_{i} = \beta_{\ell} = d, \qquad \ell \gg 0. \]
We may substitute this into the expression above to get
\[ \sum_{i=0}^{\ell} \beta_{i} = \ell d - \sum_{i=0}^{\ell} (i - 1) \gamma_{i}, \qquad \ell \gg 0. \]
On the other hand, for $\ell \gg 0$ we have $\alpha_{\ell} \geq \sum_{i=0}^{\ell} \beta_{i}$. From the definition of $\alpha_{\ell}$ and Riemann-Roch, it follows that for $\ell \gg 0$:
\[ p_{a}(C) = d \ell - \alpha_{\ell} + 1 \leq \sum_{i=0}^{\ell} (i-1)\gamma_{i} + 1. \]
So we would like to produce a function $\gamma_{i}^{\text{max}}$ which maximizes the sum above, subject to the constraint $\sum_{i=1}^{\ell} \gamma_{i} = d$ for $\ell \gg 0$.

Using the fact that $\gamma_{\ell} = 0$ for $\ell > a_{1} + \cdots + a_{e-1} + 2a_{e}/3 - e - 1$, we can write down a simple example of a function $\gamma_{i}^{\text{max}}$ that produces a rough upper bound for the arithmetic genus:
\[
\gamma_{\ell}^{\text{max}} =
\begin{cases} \deg C & \text{ if } \ell = a_{1} + \cdots + a_{e-1} + \lfloor 2a_{e}/3 \rfloor - e - 1, \\ 0 & \text{ otherwise}. \end{cases}
\]
We then obtain
\begin{align*}
p_{a}(C) &\leq 1 + \sum_{i=0}^{\ell} (i-1) \gamma_{i}^{\text{max}} \\
&= 1 + (a_{1} + \cdots + a_{e-1} + \lfloor 2a_{e}/3 \rfloor - e - 2) \deg C \\
&\leq (a_{1} + \cdots + a_{e-1} + 2a_{e}/3) \deg C \qedhere
\end{align*}

\end{proof}

Now we will use the induction hypothesis, which says that the divisor
\[ L_{I} := a_{e}H - \sum_{i \in I} 3E_{i} \]
is nef for any subset $I \subset \{ 1, 2, \ldots, s+1 \}$ with $\# I = s$. Averaging over all $I$ shows that
\[ L_{s+1} := \frac{s+1}{s} a_{e}H_{Y} - \sum_{i=1}^{s+1} 3 E_{i} \]
is nef. This implies that $L_{s+1} \cdot \widetilde{C} \geq 0$, and hence
\begin{equation}\label{equation:miInductionSum}
\sum_{i=1}^{s+1} m_{i} \leq \frac{s+1}{3s} a_{e}k.
\end{equation}

We can now give an upper bound for
\[ \frac{2^{3/2}}{3} \sum_{i=1}^{s+1} m_{i}^{3/2} \]
by integrating the various estimates that have been established so far. Applying Proposition~\ref{prop:multiplicityDeltaInvariant} to $p_{i} \in C \subset Y$ ($1 \leq i \leq s+1$) with $n = 3$ gives:
\[ p_{a}(C) - p_{g}(C) \geq \sum_{i=1}^{s+1} \frac{2^{3/2}}{3} \cdot m_{i}^{3/2} - 2m_{i}. \]
This may be rewritten as
\begin{align*}
\sum_{i=1}^{s+1} \frac{2^{3/2}}{3} \cdot m_{i}^{3/2} &\leq p_{a}(C) - p_{g}(C) + 2 \sum_{i=1}^{s+1} m_{i} \\
&\leq p_{a}(C) + a_{e}k \\
&\leq (a_{1} + \cdots + a_{e-1} + \frac{5}{3} a_{e}) k,
\end{align*}
where the second step follows from $s \geq 2$ and \eqref{equation:miInductionSum}. Next, combine this with the lower bound coming from \eqref{lowerBoundm3/2}:
\[ \frac{2^{3/2}}{3^{5/2} \cdot (s+1)^{1/2}} k^{3/2} a_{e}^{3/2} < \sum_{i=1}^{s+1} \frac{2^{3/2}}{3} \cdot m_{i}^{3/2} \leq (a_{1} + \cdots + a_{e-1} + \frac{5}{3} a_{e}) k. \]
We can square both sides, solve for $k$, and simplify using the inequalities $a_{1} \leq \cdots \leq a_{e}$:
\begin{align*}
k &< \frac{3^{5}}{2^{3}} \cdot \frac{(a_{1} + \cdots + a_{e-1} + 5a_{e}/3)^{2}}{a_{e}^{3}} (s+1) \\
&\leq \frac{3^{5}}{2^{3}} \cdot \frac{2^{2} [(e+2/3)a_{e}]^{2}}{a_{e}^{3}} (s+1) \\
&= \frac{3^{3}(3e+2)^{2}}{2 a_{e}} (s+1) \\
&\leq \frac{(3e+2)}{3 \cdot ((e+1)!)^{e}} \cdot \alpha < \frac{1}{e! ((e+1)!)^{e-1}} \alpha.
\end{align*}

On the other hand, recall from Set-up~\ref{setup:ciSurfacenef} that the integers $a_{i}$ are of the form $a_{i} = (e+1)! \cdot q_{i}$ for some integers $q_{i}$ which are pairwise coprime ($i = 1, \ldots, e-1$). By applying Proposition~\ref{degreeCurves} to $C \subset Y$ with $n = 3$ and $f = e-1$, it follows that
\[ k \geq \frac{1}{e!} \cdot q_{1} \cdots q_{e-1} = \frac{1}{e! ((e+1)!)^{e-1}} \alpha, \]
which is a contradiction. This completes the proof of Theorem~\ref{thm:dim2nef}.

\begin{remark}\label{remark:primes}
Lastly, we would like to explain how to choose the numbers $a_{i}$ with the desired properties in the proof of Theorem~\ref{thm:cgCIsurfaces}. One method is to invoke a theorem of Sondow \cite{Sondow09}, which can be viewed as an extension of Bertrand's postulate. Let $\pi(x)$ denote the number of primes not exceeding $x$, and recall:

\begin{definition}
For $n \geq 1$, the $n$th \textit{Ramanujan prime} is the smallest positive integer $R_{n}$ such that if $x \geq R_{n}$, then $\pi(x) - \pi(\frac{1}{2}x) \geq n$. 
\end{definition}

\noindent By \cite[Theorem 2]{Sondow09}, the $n$th Ramanujan prime satisfies the inequalities
\[ 2n \log 2n < R_{n} < 4n \log 4n, \qquad (n \geq 1). \]
For our purposes, the upper bound will suffice. Set $n = e-1$ and let $d_{1} \leq \cdots \leq d_{e-1}$ be integers such that
\[ d_{1} \geq (e+1)! \cdot 4(e-1) \log 4(e-1). \]
Then this lower bound holds for all $d_{i}$ so Sondow's theorem tells us that there are always $e-1$ primes between $d_{i}/(2 \cdot (e+1)!)$ and $d_{i}/(e+1)!$ for $i = 1, \ldots, e-1$. Working in reverse order $i = e-1, e-2, \ldots, 1$, it follows that we can always choose \textit{distinct} primes $q_{i}$ such that
\[ \frac{1}{2} \cdot \frac{d_{i}}{(e+1)!} < q_{i} \leq \frac{d_{i}}{(e+1)!}. \]
Finally, we may define $a_{i} \colonequals (e+1)! \cdot q_{i}$.
\end{remark}

%
%

\bibliographystyle{siam}
\bibliography{Biblio}

\footnotesize{
\textsc{Department of Mathematics, Harvard University, Cambridge, Massachusetts 02138}

\textit{E-mail address:} \href{mailto:nathanchen@math.harvard.edu}{nathanchen@math.harvard.edu}
}

\end{document}